\newtheorem{theorem}{Theorem}
\newtheorem{lemma}{Lemma}
\newtheorem{proposition}{Proposition}
\newtheorem{remark}{Remark}
\newtheorem{assumption}{Assumption}
\newbox\qedbox
\newenvironment{proof}{\smallskip\noindent{\bf Proof.}\hskip \labelsep}%
                        {\hfill\penalty10000\copy\qedbox\par\medskip}
\newcommand{\bfR}{{\Bbb R}}
\newcommand{\bfC}{{\Bbb C}}
\newcommand{\ii}{\text{i}}
\newcommand{\e}{\text{e}}
\newcommand{\dd}{\text{d}}
\newcommand{\nn}{\nonumber}
\newcommand\be{\begin{equation}}
\newcommand\ee{\end{equation}}
\newcommand{\bea}{\begin{eqnarray}}
\newcommand{\eea}{\end{eqnarray}}
\newcommand\berr{\begin{eqnarray*}}
\newcommand\eerr{\end{eqnarray*}}
\begin{document}

\title{Initial-boundary value problem and long-time asymptotics for the Kundu--Eckhaus equation on the half-line}
\author{ Boling Guo$^{a}$,\, Nan Liu$^{b,}$\footnote{Corresponding author.}\\
$^a${\small{\em Institute of Applied Physics and Computational Mathematics,  Beijing 100088, P.R. China}} \\
$^b${\small{\em The Graduate School of China Academy of Engineering Physics, Beijing 100088, P.R. China}}\setcounter{footnote}{-1}\footnote{E-mail addresses: gbl@iapcm.ac.cn (B. Guo), ln10475@163.com (N. Liu).}
}

\date{}
\maketitle

\begin{quote}
{{{\bfseries Abstract.} The initial-boundary value problem for the Kundu--Eckhaus equation on the half-line is considered in this paper by using the Fokas method. We will show that the solution $u(x,t)$ can be expressed in terms of the solution of a matrix Riemann--Hilbert problem formulated in the complex $k$-plane. Furthermore, based on a nonlinear steepest descent analysis of the associated Riemann--Hilbert problem, we can give the precise asymptotic formulas for the solution of the  Kundu--Eckhaus equation on the half-line.

}

 {\bf Keywords:} Kundu--Eckhaus equation, Riemann--Hilbert problem,  Initial-boundary value problem, Nonlinear steepest descent method, Long-time asymptotics.}
\end{quote}

\section{Introduction}
\setcounter{equation}{0}
The discovery of the Lax pairs \cite{PDL} for nonlinear evolutionary equations and the inverse scattering transform (IST) method for solving initial-value problems on the whole line, which is one of the most important developments in the study of nonlinear systems, pioneered at 1967 by Kruskal et al \cite{CSG} in researching the Korteweg-de Vries equation, turn out to be very successful. After decades of development, Fokas \cite{F1,F2} has developed a new powerful approach in 1997 which generalizes the IST formalism from initial-value to initial-boundary value (IBV) problems such that the soliton theory achieves an important development. This approach combines the main insights of the IST with elements of the theory of Riemann--Hilbert problems, and uses essentially complex analytic properties to eliminate unknown boundary values from the solution representation. Since then there have been a wide range of explorations on the analysis of boundary value problems for several of the most important integrable equations with $2\times2$ Lax pairs, such as the modified Korteweg-de Vries equation \cite{AB}, the nonlinear Schr\"oinger (NLS) equation \cite{F3}, the derivative Schr\"odinger equation \cite{JLe}.

In this paper, we use the general method for solving the IBV problem announced in \cite{F1,F2} to study the Kundu--Eckhaus (KE) equation \cite{AK}, which contains quintic nonlinearity and the Raman effect in nonlinear optics
\begin{equation}\label{1.1}
\ii u_t+u_{xx}-2|u|^2u+4\beta^2|u|^4u+4\ii\beta(|u|^2)_xu=0,~~\beta\in\bfR\setminus\{0\}.
\end{equation}
on the half-line, where $u(x,t)$ is the complex smooth envelop function. $\beta$ is a real constant, $\beta^2$ is the quintic nonlinear coefficient, and the last term represents the Raman effect, which is responsible for the self-frequency shift.
For equation \eqref{1.1}, a series of important results have been obtained, such as the gauge connections between
equation \eqref{1.1} and other soliton equations \cite{AK}, the Lax pair and the Hamiltonian structure \cite{GXG}, the higher-order rogue wave solutions\cite{WX}, and soliton solutions through the Darboux transformation \cite{DQ}. It is noted that the Riemann--Hilbert (RH) problem and long-time asymptotics for equation \eqref{1.1} with decaying initial value on the line was studied in \cite{ZQZ}, which motivates our present analysis.

Our purpose here is to consider the IBV problem for the KE equation \eqref{1.1} via the Fokas method on the half-line, that is, in the domain
$$\Omega=\{0<x<\infty,~0<t<T\}$$
and $T\leq\infty$ is a given positive constant. We will denote the initial data,  Dirichlet and Neumann boundary values of \eqref{1.1} as follows:
\bea
u(x,0)=u_0(x),~u(0,t)=g_0(t),~u_x(0,t)=g_1(t).
\eea
Assuming that the solution $u(x,t)$ exists, we show that it can be represented in terms of the solution of a matrix RH problem formulated in the plane of the complex spectral parameter $k$, with jump matrices given in terms of spectral functions $a(k)$, $b(k)$ (obtained from the initial data $u(x,0)=u_0(x)$) and $A(k)$, $B(k)$ (obtained from the boundary values $u(0,t)=g_0(t)$ and $u_x(0,t)=g_1(t)$).

An important advantage of this representation obtained is that it yields precise information about the long-time asymptotics of the solution by using the nonlinearization of the steepest descent method. The nonlinear steepest descent method was first introduced by Deift and Zhou in \cite{PD}, which was inspired by earlier works of Manakov and Its. The method has since then proved successful in determining asymptotic formulas for a large range of other initial-value problems for various integrable equations \cite{AB1,AB2,AB3,PD1,RB,XJ,HL,ZQZ}. By combining the ideas of \cite{PD} with the unified transform formalism of \cite{F1}, it is also possible to study asymptotics of solutions of IBV problems for nonlinear integrable PDEs \cite{JL1,JL2}.

Developing and extending the methods using in \cite{JL1,JL2}, we will further derive the long-time asymptotics of the solution $u(x,t)$ on the half-line by performing a nonlinear steepest descent analysis of the associated RH problem. Compared with the analysis of the initial problem considered in \cite{ZQZ}, the half-line problem is more involved. For example, the relevant RH problem for the Cauchy problem \eqref{1.1} only has a jump across $\bfR$, whereas the RH problem for the IBV problem also has jump across $\ii\bfR$, and the jump across this line involves the spectral function $\Gamma(k)$. Moreover, during the asymptotic analysis, one should find an analytic approximation $\Gamma_a(t,k)$ of $\Gamma(k)$ and deform the contour to eliminate the part of the jump that involves $\Gamma_a(t,k)$.

The outline of the paper is following. In Section 2, we study the direct spectral problem formulated in terms of the simultaneous spectral analysis of the associated Lax pair: we define appropriate eigenfunctions and spectral functions and study their properties.  We formulate the main RH problem in Section 3 and show that $u(x,t)$ can be expressed in terms of the solution of this $2\times2$ matrix RH problem. In section 4, we present the detailed derivation of the long-time asymptotics for the solution of KE equation.

\section{Spectral theory}
\setcounter{equation}{0}
\setcounter{lemma}{0}
\setcounter{theorem}{0}

\subsection{The Lax pair}
Let
\be\label{2.1}
\sigma_3={\left( \begin{array}{cc}
1 & 0 \\[4pt]
0 & -1\\
\end{array}
\right )},\quad
Q(x,t)={\left( \begin{array}{cc}
0 & u(x,t) \\[4pt]
\bar{u}(x,t) & 0 \\
\end{array}
\right )}.
\ee
The KE equation \eqref{1.1} is the condition of compatibility of \cite{WX}
\begin{equation}\label{2.2}
\begin{aligned}
&\phi_x+\ii k\sigma_3\phi=Q_1\phi,\\
&\phi_t+2\ii k^2\sigma_3\phi=Q_2\phi,\\
\end{aligned}
\end{equation}
where $\phi(x,t;k)$ is a $2\times2$ matrix-valued function and $k\in\bfC$ is the spectral parameter,
\begin{eqnarray}
Q_1&=&Q-\ii\beta Q^2\sigma_3={\left( \begin{array}{cc}
-\ii\beta|u|^2 & u \\[4pt]
\bar{u} & \ii\beta|u|^2\\
\end{array}
\right )},\label{2.3}\\
Q_2&=&4\ii\beta^2Q^4\sigma_3-2\beta Q^3-\ii Q^2\sigma_3+2kQ-\ii Q_x\sigma_3+\beta(Q_xQ-QQ_x)\\
&=&{\left( \begin{array}{cc}
4\ii\beta^2|u|^4-\ii|u|^2+\beta(u_x\bar{u}-u\bar{u}_x)  & 2ku-2\beta |u|^2u+\ii u_x\\[4pt]
2k\bar{u}-2\beta |u|^2\bar{u}-\ii\bar{u}_x & -4\ii\beta^2|u|^4+\ii|u|^2-\beta(u_x\bar{u}-u\bar{u}_x) \nn\\
\end{array}
\right )}.\label{2.4}
\end{eqnarray}
Starting with this Lax pair and following steps similar to the ones used in \cite{JLe}, we find that in order to have a
function satisfying, within its region of boundedness,
\be
\mu=I+O\bigg(\frac{1}{k}\bigg),\quad k\rightarrow\infty,\label{2.5}
\ee
we introduce a new eigenfunction $\mu(x,t;k)$ by
\begin{equation}\label{2.6}
\phi(x,t;k)=\e^{\ii\int_{(0,0)}^{(x,t)}\Delta\sigma_3}\mu(x,t;k)\e^{\ii\int_{(0,0)}^{(\infty,0)}\Delta\sigma_3}\e^{-\ii(k x+2k^2t)\sigma_3},
\end{equation}
where $\Delta$ is the closed real-valued one-form
\be\label{2.7}
\Delta(x,t)=-\beta|u|^2\dd x+\bigg(4\beta^2|u|^4-\ii\beta(u_x\bar{u}-u\bar{u}_x)\bigg)\dd t.
\ee
Then we obtain the equivalent Lax pair
\begin{equation}\label{2.8}
\begin{aligned}
&\mu_x+\ii k[\sigma_3,\mu]=U_1\mu,\\
&\mu_t+2\ii k^2[\sigma_3,\mu]=U_2\mu,\\
\end{aligned}
\end{equation}
where the matrices $U_1$ and $U_2$ are defined respectively by
\bea
U_1&=&\begin{pmatrix}
0 & u\e^{-2\ii\int_{(0,0)}^{(x,t)}\Delta}\\[4pt]
\bar{u}\e^{2\ii\int_{(0,0)}^{(x,t)}\Delta} & 0
\end{pmatrix},\label{2.9}\\
U_2&=&\begin{pmatrix}
-\ii|u|^2 & \big(2ku+\ii u_x-2\beta|u|^2u\big)\e^{-2\ii\int_{(0,0)}^{(x,t)}\Delta}\\[4pt]
\big(2k\bar{u}-\ii\bar{u}_x-2\beta|u|^2\bar{u}\big)\e^{2\ii\int_{(0,0)}^{(x,t)}\Delta} & \ii|u|^2
\end{pmatrix}.\label{2.10}
\eea
Equations \eqref{2.8} can be written in differential form as
\be\label{2.11}
\dd\big(\e^{\ii(k x+2k^2t)\hat{\sigma}_3}\mu(x,t;k)\big)=w(x,t;k),
\ee
where
\be\label{2.12}
w(x,t;k)=\e^{\ii(k x+2k^2t)\hat{\sigma}_3}\big(U_1(x,t)\dd x+U_2(x,t;k)\dd t\big)\mu(x,t;k),
\ee
and $\hat{\sigma}_3$ denote the operator which act on a $2\times2$ matrix $X$ by $\hat{\sigma}_3X=[\sigma_3,X]$, then $\e^{\hat{\sigma}_3}X=\e^{\sigma_3} X\e^{-\sigma_3}$.

\subsection{Bounded and analytic eigenfunctions}
Let equation \eqref{1.1} be valid for the half-line domain $\Omega=\{0<x<\infty,0<t<T\}$. Assume that $u(x,t)$ is sufficiently smooth functions of $(x,t)$ in the half-line domain $\Omega$ which decay as $x\rightarrow\infty$.

\begin{figure}[htbp]
\begin{minipage}[t]{0.3\linewidth}
\centering
\includegraphics[width=2.15in]{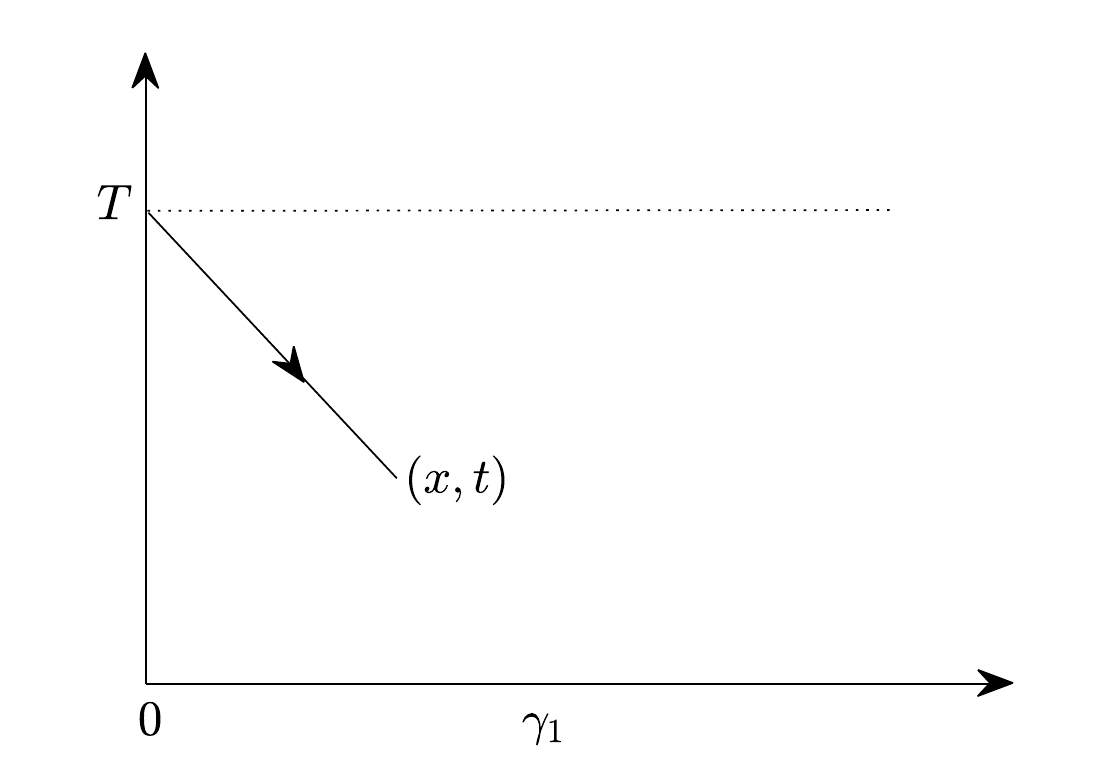}
\label{fig:side:a}
\end{minipage}%
\begin{minipage}[t]{0.3\linewidth}
\centering
\includegraphics[width=2.15in]{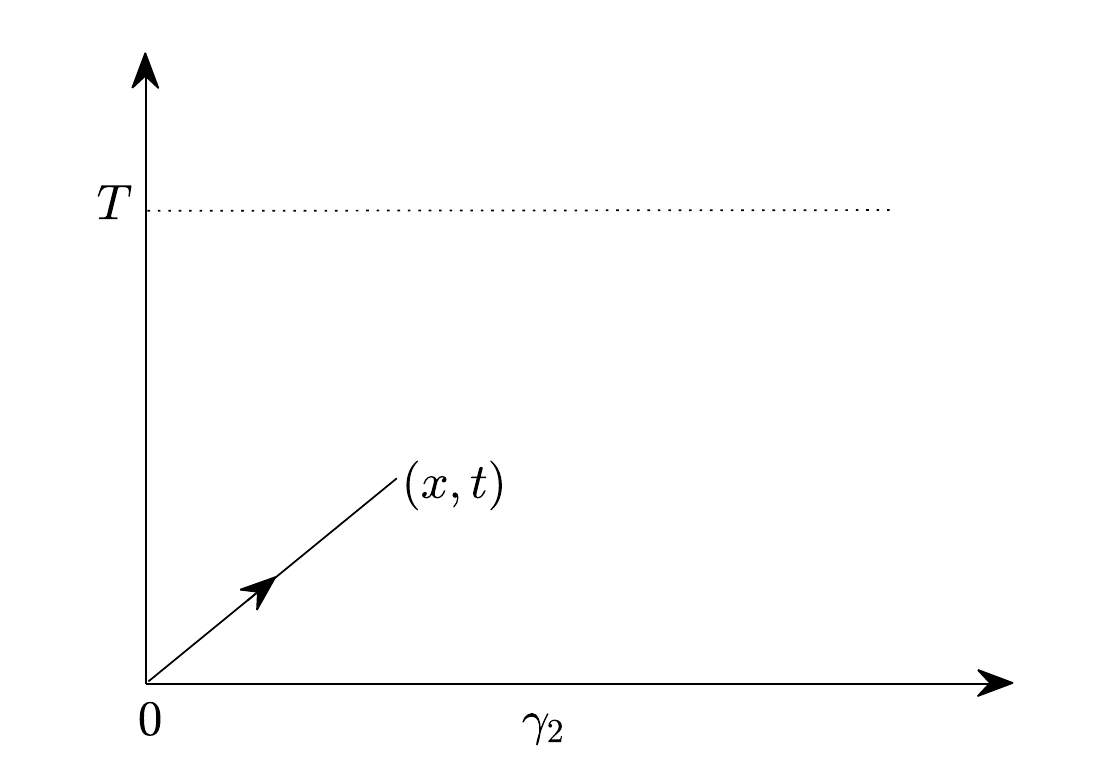}
\label{fig:side:b}
\end{minipage}
\begin{minipage}[t]{0.3\linewidth}
\centering
\includegraphics[width=2.15in]{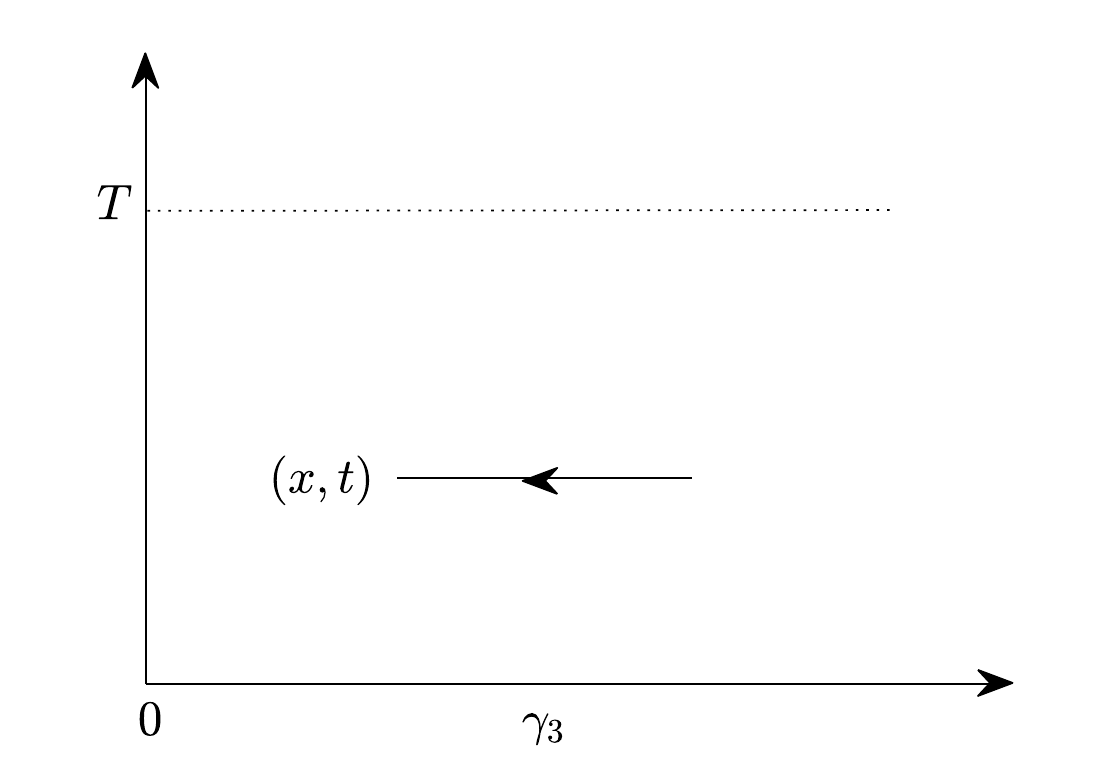}
\label{fig:side:c}
\end{minipage}
\caption{The contours $\gamma_1$, $\gamma_2$ and $\gamma_3$ in the $(x,t)$-plane.}\label{fig1}
\end{figure}

Three eigenfunctions $\{\mu_j\}_1^3$ of equation \eqref{2.11} are defined by the following Volterra integral equations
\be\label{2.13}
\mu_j(x,t;k)=I+\int_{\gamma_j}\e^{-\ii(k x+2k^2t)\hat{\sigma}_3}w_j(x',t';k),~j=1,2,3,
\ee
where $w_j$ is given by \eqref{2.12} with $\mu$ replaced with $\mu_j$, and the contours $\{\gamma_j\}^3_1$ denote the smooth curves from $(x_j,t_j)$ to $(x,t)$, and $(x_1,t_1)=(0,T)$, $(x_2,t_2)=(0,0)$, $(x_3,t_3)=(\infty,t)$. Since the one-form $w$ is exact, the integral on the right-hand side of equation \eqref{2.13} is independent of the path of
integration. We choose the particular contours shown in Fig. \ref{fig1}. This choice implies the following inequalities on the contours:
\begin{eqnarray}\label{2.14}
&&\gamma_1:~x'-x\leq0,\quad t'-t\geq0,\nn\\
&&\gamma_2:~x'-x\leq0,\quad t'-t\leq0,\\
&&\gamma_3:~x'-x\geq0.\nn
\end{eqnarray}

The second column of the matrix equation \eqref{2.13} involve the exponential $\e^{2\ii[k(x'-x)+2k^2(t'-t)]}$. Using the above inequalities it follows that this exponential is bounded in the following regions of
the complex $k$-plane:
\begin{eqnarray}\label{2.15}
&&\mu_1:\{\text{Im}k\leq0\cap\text{Im}k^2\geq0\},\nn\\
&&\mu_2:\{\text{Im}k\leq0\cap\text{Im}k^2\leq0\},\\
&&\mu_3:\{\text{Im}k\geq0\}.\nn
\end{eqnarray}
These boundedness properties imply that $[\mu_1]_2$, $[\mu_2]_2$, and $[\mu_3]_2$ are bounded and analytic for
$k\in D_3, D_4$ and $\bfC_+$, respectively. Similar conditions are valid for the first column vectors. Thus,
the functions $\{\mu_j\}^3_1$ are bounded and analytical for $k\in\bfC$ while $k$ belongs to
\begin{eqnarray}\label{2.16}
&&\mu_1:~(D_2,D_3),\nn\\
&&\mu_2:~(D_1,D_4),\\
&&\mu_3:~(\bfC_-,\bfC_+),\nn
\end{eqnarray}
where $D_n$ denotes $n$th quadrant, $1\leq n\leq4$, $\bfC_+$ and $\bfC_-$ denote the upper and lower half complex $k$-plane, respectively.

In fact, for $x=0$, $\mu_1(0,t;k)$ has enlarged the domain of boundedness: $(D_2\cup D_4,D_1\cup D_3)$, $\mu_2(0,t;k)$ have enlarged the domain of boundedness: $(D_1\cup D_3,D_2\cup D_4)$, and $\mu_2(x,0;k)$ have enlarged the domain of boundedness: $(D_1\cup D_2,D_3\cup D_4)$. We also note that the functions $\mu_1(x,t;k)$ ($T<\infty$) and $\mu_2(x,t;k)$ are entire functions of $k$. Moreover, in their corresponding regions of boundedness,
$$\mu_j(x,t;k)=I+O\bigg(\frac{1}{k}\bigg),\quad k\rightarrow\infty,\quad j=1,2,3.$$

\subsection{Spectral functions}
Any two solutions $\mu$ and $\tilde{\mu}$ of \eqref{2.11} are related by an equation of the form
\be\label{1}
\mu(x,t;k)=\tilde{\mu}(x,t;k)\e^{-\ii(k x+2k^2t)\hat{\sigma}_3}C_0(k),
\ee
where $C_0(k)$ is a $2\times2$ matrix independent of $x$ and $t$. Indeed, let $\phi$ and $\tilde{\phi}$ be the solutions of
equation \eqref{2.2} corresponding to $\mu$ and $\tilde{\mu}$ according to \eqref{2.6}. Then, since the first and second
columns of a solution of \eqref{2.2} satisfy the same equation, there exists a $2\times2$ matrix $C_1(k)$ independent of $x$ and $t$ such that
\berr
\phi(x,t;k)=\tilde{\phi}(x,t;k)C_1(k).
\eerr
It follows that \eqref{1} is satisfies with $C_0(k)=\e^{-\ii\int_{(0,0)}^{(\infty,0)}\Delta\hat{\sigma}_3}C_1(k)$.

We define $s(k)$ and $S(k)$ by the relations
\bea
&&\mu_3(x,t;k)=\mu_2(x,t;k)\e^{-\ii(k x+2k^2t)\hat{\sigma}_3}s(k),\label{2.17}\\
&&\mu_1(x,t;k)=\mu_2(x,t;k)\e^{-\ii(k x+2k^2t)\hat{\sigma}_3}S(k).\label{2.18}
\eea
Evaluation of \eqref{2.17} and \eqref{2.18} at $(x,t)=(0,0)$ and $(x,t)=(0,T)$ gives the following
expressions
\be\label{2.19}
s(k)=\mu_3(0,0;k),~S(k)=\mu_1(0,0;k)=
\bigg(\e^{2\ii k^2T\hat{\sigma}_3}\mu_2(0,T;k)\bigg)^{-1}.
\ee
Hence, the functions $s(k)$ and $S(k)$ can be obtained respectively from the evaluations at $x=0$ and at $t=T$ of the functions $\mu_3(x,0;k)$ and $\mu_2(0,t;k)$, which satisfy the linear integral equations
\bea
\mu_3(x,0;k)&=&I-\int_x^\infty\e^{\ii k(x'-x)\hat{\sigma_3}}(U_1\mu_3)(x',0;k)\dd x',\label{2.20}\\
\mu_2(0,t;k)&=&I+\int_0^t\e^{2\ii k^2(t'-t)\hat{\sigma_3}}(U_2\mu_2)(0,t';k)\dd t'.\label{2.21}
\eea
By evaluating \eqref{2.9} and \eqref{2.10} at $t=0$ and $x=0$, respectively, we find that
\bea
U_1(x,0;k)&=&\begin{pmatrix}
0 & u_0\e^{2\beta\ii\int_0^x|u_0|^2\dd x'}\\[4pt]
\bar{u}_0\e^{-2\beta\ii\int_0^x|u_0|^2\dd x'} & 0
\end{pmatrix},\label{2.22}\\
U_2(0,t;k)&=&\begin{pmatrix}
-\ii|g_0|^2 & \big(2kg_0+\ii g_1-2\beta|g_0|^2g_0\big)\e^{-2\ii\int_{0}^{t}\Delta_2(0,t')\dd t'}\\[4pt]
\big(2k\bar{g}_0-\ii\bar{g}_1-2\beta|g_0|^2\bar{g}_0\big)\e^{2\ii\int_{0}^{t}\Delta_2(0,t')\dd t'} & \ii|g_0|^2
\end{pmatrix},\nn\\
&&\Delta_2(0,t)=4\beta^2|g_0|^4-\ii\beta(g_1\bar{g}_0-g_0\bar{g}_1).\label{2.23}
\eea
Furthermore, the fact that $U_1$ and $U_2$ are traceless together with $\mu_j=I+O(\frac{1}{k})$ imply that $\det\mu_j=1$. Thus, we have
\be\label{2.24}
\det s(k)=\det S(k)=1.
\ee
\subsection{The symmetry properties}
\begin{proposition}\label{pro2.1}
For $j=1,2,3$, the function $\mu_j(x,t;k)$ satisfies the following symmetry relations
\be\label{2.25}
\mu_{11}(x,t;k)=\overline{\mu_{22}(x,t;\bar{k})},~~
\mu_{21}(x,t;k)=\overline{\mu_{12}(x,t;\bar{k})}.
\ee
\end{proposition}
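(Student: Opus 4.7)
My plan is to reduce the two scalar identities \eqref{2.25} to a single matrix symmetry. Let $\sigma_1=\bigl(\begin{smallmatrix}0&1\\1&0\end{smallmatrix}\bigr)$ and define $\tilde\mu_j(x,t;k):=\sigma_1\overline{\mu_j(x,t;\bar{k})}\sigma_1$. A direct comparison of entries shows that \eqref{2.25} is equivalent to $\tilde\mu_j(x,t;k)=\mu_j(x,t;k)$. The strategy is therefore to prove that $\tilde\mu_j$ satisfies the same Volterra integral equation \eqref{2.13} as $\mu_j$, and then to invoke uniqueness for the Volterra equation.

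The algebraic input I need is a pair of symmetries of the potentials. Because $\Delta$ in \eqref{2.7} is real-valued and $U_1$ in \eqref{2.9} is independent of $k$, inspection gives $\sigma_1\,\overline{U_1(x,t)}\,\sigma_1=U_1(x,t)$. For $U_2$ in \eqref{2.10}, complex conjugation combined with the substitution $k\mapsto\bar{k}$ reproduces the factor $2k$, the derivative terms, and the phases $\e^{\pm 2\ii\int\Delta}$ consistently, so that $\sigma_1\,\overline{U_2(x,t;\bar{k})}\,\sigma_1=U_2(x,t;k)$. Besides these, the only extra ingredient is the anti-commutation $\sigma_1\sigma_3=-\sigma_3\sigma_1$, which translates into $\sigma_1\,\e^{\ii\alpha\hat{\sigma}_3}\,\sigma_1=\e^{-\ii\alpha\hat{\sigma}_3}$ for real $\alpha$.

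I would then start from \eqref{2.13} with $k$ replaced by $\bar{k}$, conjugate both sides, and sandwich by $\sigma_1$ on the left and right, pushing the operation inside the path integral. The two $U$-symmetries turn $\sigma_1\,\overline{U_1}\,\sigma_1$ into $U_1$ and $\sigma_1\,\overline{U_2(\cdot;\bar{k})}\,\sigma_1$ into $U_2(\cdot;k)$; the anti-commutation identity converts the outer pre-factor $\sigma_1\,\overline{\e^{-\ii(\bar{k}x+2\bar{k}^2 t)\hat{\sigma}_3}}\,\sigma_1$ into $\e^{-\ii(kx+2k^2 t)\hat{\sigma}_3}$, and does the same for the inner exponential $\e^{\ii(\bar{k}x'+2\bar{k}^2 t')\hat{\sigma}_3}$ hidden inside $w_j(x',t';\bar{k})$. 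The base value $I$ at $(x_j,t_j)$ is preserved because $\sigma_1 I\sigma_1=I$. Hence $\tilde\mu_j$ satisfies the same Volterra equation as $\mu_j$, and the two functions must coincide.

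The only step that requires genuine attention, rather than routine manipulation, is tracking the signs from the interplay of three operations---complex conjugation (which flips $\ii$), the substitution $\bar{k}\to k$, and the sandwich with $\sigma_1$---inside the phase factors $\e^{\pm\ii kx\hat{\sigma}_3}$ and $\e^{\pm 2\ii k^2 t\hat{\sigma}_3}$. Checking that all phases cancel cleanly both in the outer pre-factor and in the $w_j$ integrand is straightforward bookkeeping, but it is the only place where a sign slip could plausibly derail the proof.
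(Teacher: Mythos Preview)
Your proposal is correct and coincides with the paper's argument: the paper introduces the operator $TA=\sigma_1\,\overline{A}\,\sigma_1$ and notes that \eqref{2.25} follows from $(TU_j)(\bar{k})=U_j(k)$ for $j=1,2$, which is precisely your pair of $U$-symmetries, with your Volterra-uniqueness argument filling in the details the paper leaves implicit.
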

\begin{proof}
For~a~$2\times2$~matrix~A, we define a operator $T$ as follows:
$$TA=\begin{pmatrix}
\bar{a}_{22} & \bar{a}_{21}\\[4pt]
\bar{a}_{12} & \bar{a}_{11}\\
\end{pmatrix},~\mbox{where}~A=\begin{pmatrix}
a_{11} & a_{12}\\[4pt]
a_{21} & a_{22}\\
\end{pmatrix}.$$
Equation \eqref{2.25} is a consequence of the symmetries $(TU_j)(\bar{k})=U_j(k)$ valid for $j=1,2$.
\end{proof}

Particularly, it follows from \eqref{2.25} that the spectral matrices $s(k)$ and $S(k)$ can be written as
\be\label{2.26}
s(k)=\begin{pmatrix}
\overline{a(\bar{k})} & b(k)\\[4pt]
\overline{b(\bar{k})} & a(k)\\
\end{pmatrix},\quad
S(k)=\begin{pmatrix}
\overline{A(\bar{k})} & B(k)\\[4pt]
\overline{B(\bar{k})} & A(k)\\
\end{pmatrix}.
\ee

\subsection{The global relation}
The initial and boundary values of a solution of the KE equation are not independent. However, it turns out that the spectral functions $s(k)$ and $S(k)$ can be determined by the initial datum and boundary values, respectively. Thus, the spectral functions must satisfy a surprisingly simple relation. Indeed, it follows from \eqref{2.17} and \eqref{2.18} that
\be\label{2.27}
\mu_1(x,t;k)\e^{-\ii(k x+2k^2t)\hat{\sigma}_3}(S^{-1}(k)s(k))=\mu_3(x,t;k).
\ee
Since $\mu_1(0,T;k)=I$, evaluation at $(0,T)$ yields the following relation:
\be\label{2.28}
-I+S^{-1}(k)s(k)+\e^{2\ii k^2T\hat{\sigma}_3}\int_0^\infty
\e^{\ii k x'\hat{\sigma}_3}(U_1\mu_3)(x',T;k)\dd x'=0.
\ee
The (12) component of this equation is
\be\label{2.29}
B(k)a(k)-A(k)b(k)=\e^{4\ii k^2T}c^+(k),~\mbox{Im}k\geq0,
\ee
for $T<\infty$, where
$$c^+(k)=\int_0^\infty\e^{2\ii k x'}(U_1\mu_3)_{12}(x',T;k)\dd x'$$
is a function analytic for Im$k>0$ which is $O(\frac{1}{k})$ as $k\rightarrow\infty$.

In the case $T=\infty$, the global relation \eqref{2.29} reads
\be\label{2.30}
B(k)a(k)-A(k)b(k)=0,~k\in D_1.
\ee

\subsection{The properties of spectral functions}
Let $[A]_1$ ($[A]_2$) be denoted as the first (second) column of a $2\times2$ matrix $A$.

Given $u_0(x)\in \mathcal{S}([0,\infty))$, we define the map
\berr
\mathcal{J}:\{u_0(x)\}\mapsto\{a(k),b(k)\}
\eerr
by
\be\label{2.31}
\begin{pmatrix}
b(k)\\[4pt]
a(k)\\\end{pmatrix}=[\mu_3(0;k)]_2,\quad \mbox{Im}k\geq0,
\ee
where
$$\mu_3(x;k)=I-\int_x^\infty\e^{\ii k(x'-x)\hat{\sigma}_3}\big(U_1(x',0;k)\mu_3(x';k)\big)\dd x',$$
$U_1(x,0;k)$ is given in terms of $u_0(x)$ by \eqref{2.22}.
The analysis of the Volterra linear integral equation gives the following properties of $a(k)$ and $b(k)$.

(i) $a(k)$ and $b(k)$ are analytic for Im$k>0$, continuous, and bounded for Im$k\geq0$;

(ii) $a(k)=1+O(\frac{1}{k})$, $b(k)=O(\frac{-\frac{\ii}{2}u_0(0)}{k})$, as $k\rightarrow\infty$;

(iii) $a(k)\overline{a(\bar{k})}-b(k)\overline{b(\bar{k})}=1$;

(iv) The map $\mathcal{K}:\{a(k),b(k)\}\mapsto \{u_0(x)\}$, inverse to $\mathcal{J}$, is defined as follows:
\be\label{2.32}
u_0(x)=2\ii m(x)\e^{-8\beta\ii\int_0^x|m(x')|^2\dd x'},\quad m(x)=\lim_{k\rightarrow\infty}(k M^{(x)}(x;k))_{12},
\ee
where $M^{(x)}(x;k)$ is the unique solution of the matrix  RH problem:

$\bullet$
$
M^{(x)}(x;k)=\left\{
\begin{aligned}
&M^{(x)}_+(x;k),\quad k\in\bfC_+,\\
&M^{(x)}_-(x;k),\quad k\in\bfC_-,
\end{aligned}
\right.
$
is a sectionally meromorphic function;

$\bullet$  $M^{(x)}(x;k)$ satisfies the jump condition
\be\label{2.33}
M^{(x)}_+(x;k)=M^{(x)}_-(x;k)J^{(x)}(x,k),~~k\in\bfR,
\ee
where
\be\label{2.34}
J^{(x)}(x,k)=\begin{pmatrix}
\frac{1}{a(k)\overline{a(\bar{k})}} ~& \frac{b(k)}{\overline{a(\bar{k})}}\e^{-2\ii k x} \\[4pt]
-\frac{\overline{b(\bar{k})}}{a(k)}\e^{2\ii k x} ~& 1 \\
\end{pmatrix};
\ee

$\bullet$  $M^{(x)}(x;k)=I+O(\frac{1}{k}),~k\rightarrow\infty$;

$\bullet$ We assume that $a(k)$ has $n$ simple zeros $\{k_j\}_{1}^{n}$ in $\bfC_+$. The associated residues of $M^{(x)}(x;k)$ satisfy the relations
\bea
{Res}_{k=k_j}[M^{(x)}(x;k)]_1&=&\frac{e^{2\ii k_jx}}{\dot{a}(k_j)b(k_j)}
[M^{(x)}(x;k_j)]_2;\label{2.35}\\
{Res}_{k=\bar{k}_j}[M^{(x)}(x;k)]_2&=&\frac{e^{-2\ii\bar{k}_jx}}
{\overline{\dot{a}(k_j)b(k_j)}}[M^{(x)}(x;\bar{k}_j)]_1;\label{2.36}
\eea

(v) $\mathcal{J}^{-1}=\mathcal{K}$.

We refer to the appendix of \cite{JLe} for a derivation of (iv) and (v) in the similar
case of the derivative NLS equation.

Let $g_0(t),g_1(t)$ be smooth functions, we define the map
\berr
\mathcal{\tilde{J}}:\{g_0(t),g_1(t)\}\mapsto \{A(k),B(k)\}
\eerr
as follows
\be\label{2.37}
\begin{pmatrix}
B(k)\\
A(k)\\\end{pmatrix}=[\mu_1(0;k)]_2,\quad \mbox{Im}k^2\geq0,
\ee
where
$$\mu_1(t;k)=I+\int^t_T\e^{2\ii k^2(t'-t)\hat{\sigma}_3}
\big(U_2(0,t';k)\mu_1(t';k)\big)\dd t'$$
with $U_2(0,t;k)$ is determined by \eqref{2.23}.

The spectral function $A(k)$ and $B(k)$ have the following properties:

(i) $A(k)$ and $B(k)$ are entire function and bounded for $k\in\bar{D}_1\cup \bar{D}_3$; if $T=\infty$, $A(k)$ and $B(k)$ are defined only in $\bar{D}_1\cup \bar{D}_3$.

(ii) $A(k)=1+O(\frac{1}{k})$, $B(k)=O(\frac{-\frac{\ii}{2}g_0(0)}{k})$, $k\rightarrow\infty$;

(iii) $A(k)\overline{A(\bar{k})}-B(k)\overline{B(\bar{k})}=1$;

(iv) The map $\mathcal{\tilde{K}}:\{A(k),B(k)\}\mapsto \{g_0(t),g_1(t)\}$, inverse to $\mathcal{\tilde{J}}$, is defined as follows:
\bea\label{2.38}
\begin{aligned}
g_0(t)&=2\ii (M^{(t)}_1)_{12}(t)\e^{2\ii\int_0^t\Delta_2(t')\dd t'},\\
g_1(t)&=4(M^{(t)}_2)_{12}(t)\e^{2\ii\int_0^t\Delta_2(t')\dd t'}+2\ii g_0(t)\bigg((M^{(t)}_1)_{22}(t)-\beta|g_0(t)|^2\bigg),
\end{aligned}
\eea
where
\berr
\Delta_2(t)=16\beta\bigg(|(M^{(t)}_1)_{12}|^2\text{Re}(M^{(t)}_1)_{22}-\text{Re}\big[(M^{(t)}_1)_{12}(\bar{M}^{(t)}_2)_{12}\big]\bigg),
\eerr
and
\bea
M^{(t)}(t;k)&=&I+\frac{M^{(t)}_1(t)}{k}+\frac{M^{(t)}_2(t)}{k^2}
+O(\frac{1}{k^3}),~k\rightarrow\infty\nn
\eea
with $M^{(t)}(t;k)$ is the unique solution of the matrix  RH problem:

$\bullet$
$
M^{(t)}(t;k)=\left\{
\begin{aligned}
&M^{(t)}_+(t;k),\quad k\in D_1\cup D_3,\\
&M^{(t)}_-(t;k),\quad k\in D_2\cup D_4,
\end{aligned}
\right.
$
is a sectionally meromorphic function;

$\bullet$  $M^{(t)}(t;k)$ satisfies the jump condition
\be\label{2.39}
M^{(t)}_+(t;k)=M^{(t)}_-(t;k)J^{(t)}(t,k),~~\mbox{Im}k^2=0,
\ee
where
\be\label{2.40}
J^{(t)}(t,k)=\begin{pmatrix}
\frac{1}{A(k)\overline{A(\bar{k})}} ~& \frac{B(k)}{\overline{A(\bar{k})}}\e^{-4\ii k^2t} \\[4pt]
-\frac{\overline{B(\bar{k})}}{A(k)}\e^{4\ii k^2t} ~& 1 \\
\end{pmatrix};
\ee

$\bullet$  $M^{(t)}(t,k)=I+O(\frac{1}{k}),~k\rightarrow\infty$;

$\bullet$ We assume that $A(k)$ has $N$ simple zeros $\{z_j\}_{1}^{N}$ in $D_1\cup D_3$. The associated residues of $M^{(t)}(t;k)$ are given by
\bea
{Res}_{k=z_j}[M^{(t)}(t;k)]_1&=&\frac{e^{4\ii z_j^2t}}{\dot{A}(z_j)B(z_j)}
[M^{(t)}(t;z_j)]_2;\\
{Res}_{k=\bar{z}_j}[M^{(t)}(t;k)]_2&=&\frac{e^{-4\ii\bar{z}_j^2t}}
{\overline{\dot{A}(z_j)B(z_j)}}[M^{(t)}(t;\bar{z}_j)]_1;
\eea

(v) $\mathcal{\tilde{J}}^{-1}=\mathcal{\tilde{K}}$.

\section{The Riemann--Hilbert problem}
\setcounter{equation}{0}
\setcounter{lemma}{0}
\setcounter{theorem}{0}
Equations \eqref{2.17} and \eqref{2.18} can be rewritten in a form expressing the jump condition of a $2\times2$ RH problem. By solving this RH problem, the solution of KE equations \eqref{1.1} can be recovered. In fact, by using \eqref{2.17} and \eqref{2.18} and the definitions of the spectral functions \eqref{2.26}, we find
\be\label{3.1}
M_+(x,t;k)=M_-(x,t;k)J(x,t,k),\quad \mbox{Im}k^2=0,
\ee
where the matrices $M_-,M_+$ and $J$ are defined by
\bea\label{3.2}
M(x,t;k)=\left\{
\begin{aligned}
&\bigg(\frac{[\mu_2]_1}{a(k)}~[\mu_3]_2\bigg),\quad k\in D_1,\\
&\bigg(\frac{[\mu_1]_1}{d(k)}~[\mu_3]_2\bigg),\quad k\in D_2,\\
&\bigg([\mu_3]_1~\frac{[\mu_1]_2}{\overline{d(\bar{k})}}\bigg),\quad k\in D_3,\\
&\bigg([\mu_3]_1~\frac{[\mu_2]_2}{\overline{a(\bar{k})}}\bigg),\quad k\in D_4,
\end{aligned}
\right.
\eea
where
\be\label{3.3}
d(k)=a(k)\overline{A(\bar{k})}-b(k)\overline{B(\bar{k})},\quad k\in \bar{D}_2,
\ee
and
\bea\label{3.4}
&&J_1(x,t,k)=\begin{pmatrix}
1 & 0\\[4pt]
\Gamma(k)\e^{2\ii\theta(k)} & 1
\end{pmatrix},\nn\\
&&J_4(x,t,k)=\begin{pmatrix}
1-r_1(k)\overline{r_1(\bar{k})} ~& \overline{r_1(\bar{k})}\e^{-2\ii\theta(k)} \\[4pt]
-r_1(k)\e^{2\ii\theta(k)} ~& 1 \\
\end{pmatrix},\\
&&J_3(x,t,k)=\begin{pmatrix}
1 &~ -\overline{\Gamma(\bar{k})}\e^{-2\ii\theta(k)}\\[4pt]
0 & 1
\end{pmatrix},\nn\\
&&J_2(x,t,k)=(J_1J_4^{-1}J_3)(x,t,k)=\begin{pmatrix}
1 &~ -\overline{r(\bar{k})}\e^{-2\ii\theta(k)}\\[4pt]
r(k)\e^{2\ii\theta(k)} ~& 1-r(k)\overline{r(\bar{k})} \\
\end{pmatrix},\nn
\eea
\be\label{3.5}
r_1(k)=\frac{\overline{b(\bar{k})}}{a(k)},\quad\Gamma(k)=-\frac{\overline{B(\bar{k}})}{a(k)d(k)},\quad r(k)=r_1(k)+\Gamma(k),\quad\theta(k)=k x+2k^2t.
\ee

The contour for this RH problem is depicted in Fig. 2.
\begin{figure}[htbp]
  \centering
  \includegraphics[width=4in]{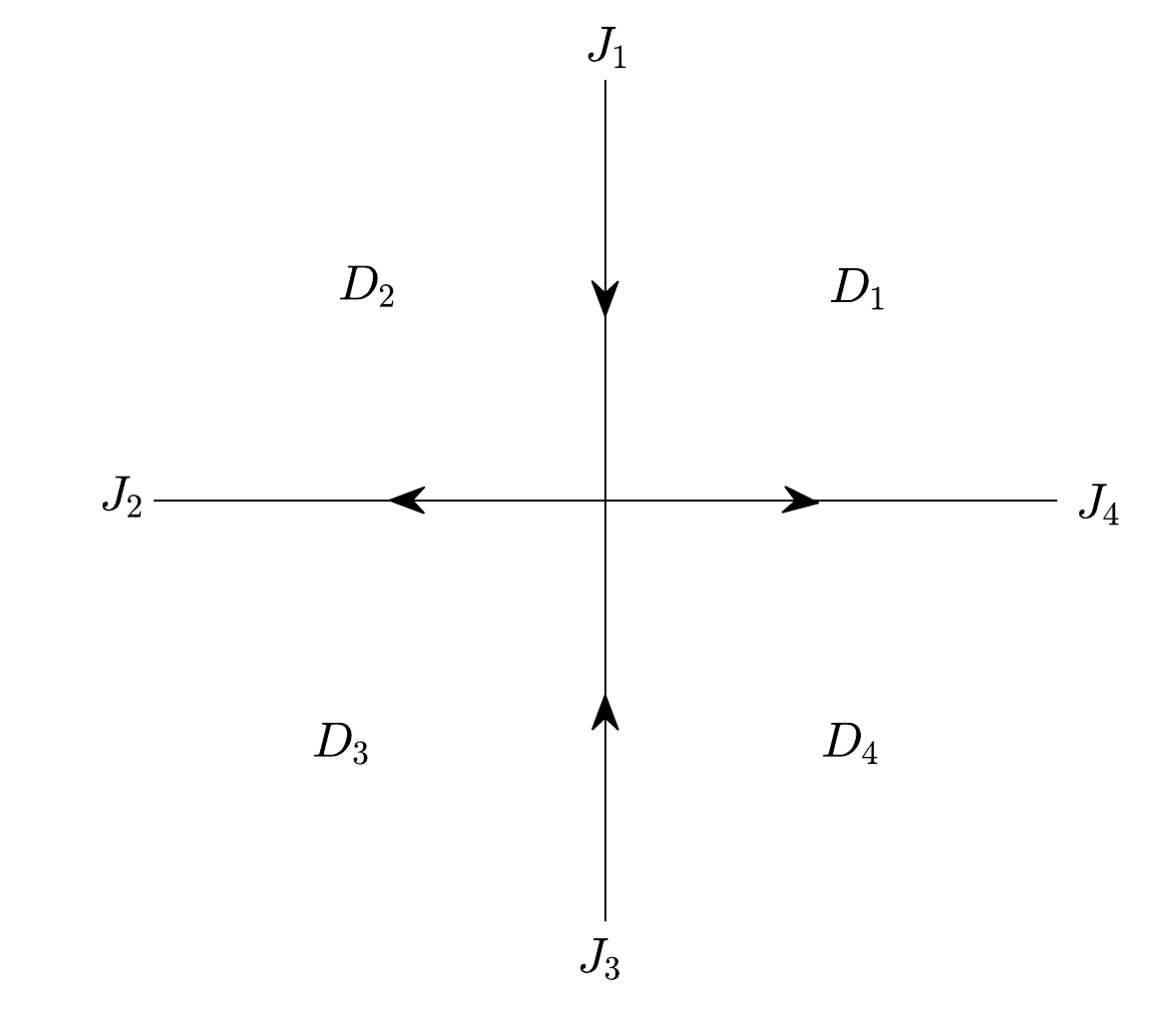}
  \caption{The contour for the RH problem.}
\end{figure}

The matrix function $M(x,t;k)$ defined by \eqref{3.2} is, in general, a sectionally meromorphic function of $k$, with possible poles at the zeros of $a(k)$, $d(k)$ and at the complex conjugates of these zeros. We have assumed that

$\bullet$ $a(k)$ can have $n=n_1+n_2$ simple zeros in $\{k\in\bfC|\mbox{Im}k>0\}$: $k_j\in D_1$, $j=1,\cdots,n_1$, $k_j\in D_2$, $j=n_1+1,\cdots,n$.

$\bullet$ $d(k)$ can have $\Lambda$ simple zeros: $\{\lambda_j\}_1^\Lambda\in D_2$;

$\bullet$ none of the zeros of $a(k)$ in $D_2$ coincide with a zero of $d(k)$;

$\bullet$ none of these functions has zeros on $\bfR\cup\ii\bfR$.\\
The associated residue formulaes are the following:
\bea\label{3.6}
\begin{aligned}
{Res}_{k=k_j}[M(x,t;k)]_1&=\frac{e^{2\ii\theta(k_j)}}{\dot{a}(k_j)b(k_j)}
[M(x,t;k_j)]_2,~j=1,\cdots,n_1;\\
{Res}_{k=\bar{k}_j}[M(x,t;k)]_2&=\frac{e^{-2\ii\theta(\bar{k}_j)}}
{\overline{\dot{a}(k_j)b(k_j)}}[M(x,t;\bar{k}_j)]_1,~j=1,\cdots,n_1;
\end{aligned}
\eea
and
\bea\label{3.7}
\begin{aligned}
{Res}_{k=\lambda_j}[M(x,t;k)]_1&=\frac{\overline{B(\bar{\lambda}_j)}
e^{2\ii\theta(\lambda_j)}}{\dot{d}(\lambda_j)a(\lambda_j)}[M(x,t;\lambda_j)]_2,~j=1,\cdots,\Lambda;\\
{Res}_{k=\bar{\lambda}_j}[M(x,t;k)]_2&=\frac{B(\bar{\lambda}_j)e^{-2\ii\theta(\bar{\lambda}_j)}}
{\overline{\dot{d}(\lambda_j)a(\lambda_j)}}[M(x,t;\bar{\lambda}_j)]_1,~j=1,\cdots,\Lambda;
\end{aligned}
\eea
where $\theta(k_j)=k_jx+2k_j^2t$.

The main result on the inverse spectral problem is the following.
\begin{theorem}\label{th3.1}
Let $u_0(x)\in \mathcal{S}([0,\infty))$, here $\mathcal{S}([0,\infty))$ is the space of Schwartz function on $[0,\infty)$, suppose that $u_0(x)$ is compatible with smooth functions $g_0(t)$ and $g_1(t)$ at $x=t=0$. The spectral functions $s(k)$ and $S(k)$ defined by \eqref{2.19} satisfy the global relation \eqref{2.29} for $T<\infty$ with $c^+(k)$ is analytic in $\{k\in\bfC|\mbox{Im}k>0\}$ and \eqref{2.30} for $T=\infty$ (in this case, it is assumed that $g_0(t), g_1(t)$ belong to $\mathcal{S}([0,\infty))$). The assumption of the  possible zeros for $a(k)$ and $d(k)$ are as stated above.

Let $M(x,t;k)$ as the solution of the following matrix RH problem:

$\bullet$ $M(x,t;k)$ is sectionally meromorphic in $\bfC\setminus\{\mbox{Im}k^2=0\}$,

$\bullet$  $M(x,t;k)$ satisfies the jump condition \eqref{3.1} with the jump matrices in \eqref{3.4},

$\bullet$  $M(x,t;k)$ has the following asymptotics:
\be\label{3.8}
M(x,t;k)=I+O\bigg(\frac{1}{k}\bigg),~k\rightarrow\infty,
\ee

$\bullet$ The associated residues of $M(x,t;k)$ satisfy the relations in \eqref{3.6}-\eqref{3.7}.

Then $M(x,t;k)$ exists and is unique and $u(x,t)$ is defined by
\bea\label{3.9}
u(x,t)&=&2\ii m(x,t)\e^{2\ii\int_{(0,0)}^{(x,t)}\Delta},\nn\\
m(x,t)&=&\lim_{k\rightarrow\infty}(k M(x,t;k))_{12},\\
\Delta(x,t)&=&-4\beta|m|^2\dd x+4\ii\beta(m\bar{m}_x-m_x\bar{m})\dd t,\nn
\eea
satisfies the KE equation \eqref{1.1}. Furthermore, $u(x,t)$ satisfies the initial and boundary conditions
\berr
u(x,0)=u_0(x),~u(0,t)=g_0(t),~u_x(0,t)=g_1(t).
\eerr
\end{theorem}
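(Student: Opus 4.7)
The plan is to split the proof into three parts: existence and uniqueness of the RH problem; verification that the reconstructed $u(x,t)$ satisfies the KE equation \eqref{1.1}; and verification that $u(x,t)$ assumes the prescribed initial and boundary values.

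For existence and uniqueness, I would first remove the residue conditions \eqref{3.6}--\eqref{3.7} by multiplying $M$ by suitable rational factors built from the zeros $\{k_j\}$, $\{\lambda_j\}$ and their conjugates, thereby reducing to a regular $2\times2$ RH problem on $\bfR\cup\ii\bfR$. On $\bfR$ the jump $J_4$ (and likewise $J_2$) admits a Beals--Coifman factorization of Hermitian type, and Zhou's vanishing lemma applies: any solution of the homogeneous problem (with $M\to 0$ at infinity) must vanish identically, using that $|r_1|^2<1$ follows from property (iii) of $\{a,b\}$ in Section 2.6 and that the analogous bound on $|r|$ follows from the global relation combined with property (iii) of $\{A,B\}$. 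On $\ii\bfR$ the jumps $J_1,J_3$ are unipotent, so they cause no additional obstruction. Fredholm theory then yields existence and uniqueness.

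For the reconstruction of $u(x,t)$, I would carry out the standard dressing argument. Since the jumps \eqref{3.4} depend on $(x,t)$ only through $\e^{2\ii\theta(k)}$ with $\theta(k)=kx+2k^2t$, the matrices $(M_x+\ii k[\sigma_3,M])M^{-1}$ and $(M_t+2\ii k^2[\sigma_3,M])M^{-1}$ have no jumps across the contour and are therefore entire in $k$. Combined with the asymptotic expansion $M=I+M^{(1)}/k+M^{(2)}/k^2+\cdots$ and Liouville's theorem, they must be polynomial in $k$ of degrees $0$ and $1$, respectively; matching coefficients of $k$ identifies them with $U_1$ and $U_2$ from \eqref{2.9}--\eqref{2.10}, with the closed one-form $\Delta$ recovered from the diagonal entries of $M^{(1)}$ exactly as in \eqref{3.9}. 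The compatibility of the resulting Lax pair is then precisely \eqref{1.1}.

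For the initial and boundary values, I would evaluate the RH problem at $t=0$ and at $x=0$ separately. At $t=0$ the exponential reduces to $\e^{2\ii kx}$; using the structure \eqref{3.3} of $d(k)$ together with the global relation, the jump across $\ii\bfR$ collapses, so $M(x,0;k)$ coincides with the solution of the $M^{(x)}$ RH problem of Section 2.6, and property (iv) of $\mathcal{K}$ yields $u(x,0)=u_0(x)$. At $x=0$ the exponential reduces to $\e^{4\ii k^2t}$; here the global relation \eqref{2.29} (or \eqref{2.30} if $T=\infty$) is used to absorb the $\Gamma$--contribution on $\ii\bfR$, reducing $M(0,t;k)$ to the $M^{(t)}$ RH problem, and the inverse map $\tilde{\mathcal{K}}$ then delivers both $g_0(t)$ and $g_1(t)$ via \eqref{2.38}. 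The step I expect to be most delicate is this last one: the spectral function $\Gamma(k)=-\overline{B(\bar k)}/(a(k)d(k))$ entangles the initial-data and boundary-data spectral functions through $d(k)$, so one must check carefully that the residues of $M(0,t;k)$ at the zeros of $d$ in $D_2$ and at the zeros of $a$ in $D_1$ are converted, via the global relation, into precisely the residues of $M^{(t)}$ at the zeros of $A(k)$ in $D_1\cup D_3$; moreover, in the case $T=\infty$ the decay assumptions on $g_0,g_1$ are needed to make the limit $T\to\infty$ of the global relation well defined and to keep $A(k),B(k)$ under control on $\bar D_1\cup\bar D_3$.
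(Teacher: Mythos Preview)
Your proposal is correct and follows essentially the same route as the paper: the paper invokes a vanishing lemma (citing \cite{F3}) for existence and uniqueness after regularizing the singular RH problem (citing \cite{FoI}), the dressing method (citing \cite{VEZ1,VEZ2}) for the fact that $u$ solves \eqref{1.1}, and the argument of \cite{JLe} for the recovery of $u_0,g_0,g_1$.

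One small correction to your sketch: at $t=0$ the reduction to the $M^{(x)}$ problem does \emph{not} use the global relation. What is used there is simply that $\Gamma(k)$ is analytic and bounded in $\bar D_2$ (and $\overline{\Gamma(\bar k)}$ in $\bar D_3$); multiplying $M(x,0;k)$ in $D_2$ and $D_3$ by the triangular factors $J_1^{-1}$ and $J_3^{-1}$ removes the jump across $\ii\bfR$ and turns the jump $J_2$ on $\bfR_-$ into one of $J_4$ type, after which comparison with $M^{(x)}$ and property~(iv) of $\mathcal{K}$ gives $u(x,0)=u_0(x)$. The global relation \eqref{2.29}--\eqref{2.30} enters only in the $x=0$ step, exactly as you indicate, where it is needed to convert the RH data (and the residues at the zeros of $a$ and $d$) into the data of the $M^{(t)}$ problem governed by $A,B$.
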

\begin{proof}
In the case when $a(k)$ and $d(k)$ have no zeros, the unique solvability is a consequence of an appropriate vanishing lemma as in the case of the NLS equation \cite{F3}. If $a(k)$ and $d(k)$ have zeros, this singular RH problem can be mapped to a regular one coupled with a system of algebraic equations \cite{FoI}. Moreover, it follows from standard arguments using the dressing method \cite{VEZ1,VEZ2} that if $M$ solves the above RH problem and $u(x,t)$ is defined by \eqref{3.9}, then $u(x,t)$ solves the KE equation \eqref{1.1}. The proof that $u(x,0)=u_0(x),~u(0,t)=g_0(t)$ and $u_{x}(0,t)=g_1(t)$ follows arguments similar to the ones used in \cite{JLe}.
\end{proof}

\section{Long-time asymptotics}
\setcounter{equation}{0}
\setcounter{lemma}{0}
\setcounter{theorem}{0}
In this section, we aim to derive the long-time asymptotics of the solution $u(x,t)$ of KE equation \eqref{1.1} on the half-line  based on a nonlinear steepest descent analysis of the associated RH problem established in Section 3. Now we fix $T=\infty$, that is, we are concerned with the IBV problem for equation \eqref{1.1} posed in the quarter-plane domain
$$\{(x,t)\in\bfR^2|x\geq0,t\geq0\}.$$ Furthermore, we make the following assumptions.
\begin{assumption}\label{ass1}
In what follows, we assume that the following conditions hold:

$\bullet$  the initial and boundary values lie in the Schwartz class.

$\bullet$ the spectral functions $a(k), b(k), A(k), B(k)$ defined in \eqref{2.26} satisfy the global relation \eqref{2.30}.

$\bullet$ $a(k)$ and $d(k)$ have no zeros in $\bar{D}_1\cup \bar{D}_2$ and $\bar{D}_2$, respectively.

$\bullet$ the initial and boundary values $u_0(x), g_0(t)$, and $g_1(t)$ are compatible with equation \eqref{1.1} to all orders at $x=t=0$, i.e., they satisfy
\bea
g_0(0)&=&u_0(0),\quad g_1(0)=u_0'(0),\quad \nn\\
\ii g'_0(0)+u_{0}''(0)-2|u_0(0)|^2u_0(0)&+&4\beta^2|u_0(0)|^4u_0(0)+4\ii\beta(|u_0(0)|^2)_xu_0(0)=0,\cdots.\nn
\eea
\end{assumption}

Let $\xi=\frac{x}{t}$, $\Sigma=\bfR\cup\ii\bfR$ and introduce a new phase function $\Phi(\xi,k)=4\ii k^2+2\ii\xi k$. Then we can rewrite the RH problem \eqref{3.1} as
\bea\label{4.1}
M_+(x,t;k)=M_-(x,t;k)J(x,t,k),\quad k\in\Sigma
\eea
with the jump matrix $J(x,t,k)$ is given by
\be\label{4.2}
J(x,t,k)=\left\{
\begin{aligned}
&\begin{pmatrix}
1 & 0\\[4pt]
\Gamma(k)\e^{t\Phi(\xi,k)} & 1
\end{pmatrix},\qquad\qquad\qquad\quad~ k\in\ii\bfR_+,\\
&\begin{pmatrix}
1 &~ -\overline{r(\bar{k})}\e^{-t\Phi(\xi,k)}\\[4pt]
r(k)\e^{t\Phi(\xi,k)} ~& 1-r(k)\overline{r(\bar{k})} \\
\end{pmatrix},\quad~ k\in\bfR_-,\\
&\begin{pmatrix}
1 &~ -\overline{\Gamma(\bar{k})}\e^{-t\Phi(\xi,k)}\\[4pt]
0 & 1
\end{pmatrix},\qquad\qquad\quad~ k\in\ii\bfR_-,\\
&\begin{pmatrix}
1-r_1(k)\overline{r_1(\bar{k})} ~& \overline{r_1(\bar{k})}\e^{-t\Phi(\xi,k)} \\[4pt]
-r_1(k)\e^{t\Phi(\xi,k)} ~& 1 \\
\end{pmatrix},~~ k\in\bfR_+,
\end{aligned}
\right.
\ee
where $\bfR_+=[0,\infty)$ and $\bfR_-=(-\infty,0]$ denote the positive and negative halves of the real axis.
Functions $r_1(k),\Gamma(k), r(k)$ are defined by
\bea\label{4.3}
r_1(k)&=&\frac{\overline{b(\bar{k})}}{a(k)},~~~~~\qquad k\in\bfR,\nn\\
\Gamma(k)&=&-\frac{\overline{B(\bar{k}})}{a(k)d(k)},~~~k\in\bar{D}_2,\\
 r(k)&=&r_1(k)+\Gamma(k),~k\in\bfR_-,\nn
\eea
and these functions admit the following properties from the discussion in Section 2:

$\bullet$ $r_1(k)$ is smooth and bounded on $\bfR$;

$\bullet$ $\Gamma(k)$ is smooth and bounded on $\bar{D}_2$ and analytic in $D_2$;

$\bullet$ $r(k)$ is smooth and bounded on $(-\infty, 0]$ and $\sup_{k\in(-\infty, 0]}|r(k)|<1$;

$\bullet$ There exist complex constants $\{\Gamma_j\}_{j=1}^\infty$ and $\{r_j\}_{j=1}^\infty$ such that, for any $N\geq1$,
\bea
\Gamma(k)&=&\sum_{j=1}^N\frac{\Gamma_j}{k^j}+O\bigg(\frac{1}{k^{N+1}}\bigg),~~k\rightarrow\infty,~~k\in\bar{D}_2,\label{4.4}\\
r(k)&=&\sum_{j=1}^N\frac{r_j}{k^j}+O\bigg(\frac{1}{k^{N+1}}\bigg),~~|k|\rightarrow\infty,~~k\in\bfR_-.\label{4.5}
\eea
\begin{remark}
We are in a position to prove $\sup_{k\in(-\infty, 0]}|r(k)|<1$. Recalling the definitions $s(k)$ and $S(k)$ in \eqref{2.26}, we have
\berr
S^{-1}(k)s(k)=\begin{pmatrix}
\overline{d(\bar{k})} & c(k)\\[4pt]
\overline{c(\bar{k})} & d(k)
\end{pmatrix}
\eerr
with $c(k)=b(k)A(k)-a(k)B(k)$. However, $r(k)=r_1(k)+\Gamma(k)=\frac{\overline{c(\bar{k})}}{d(k)}$, hence, the result $\sup_{k\in(-\infty, 0]}|r(k)|<1$ is a consequence of $\det s(k)=\det S(k)=1$.
\end{remark}
\subsection{Transformations of the RH problem}
By performing a number of transformations, we can bring the RH problem \eqref{4.1} to a form suitable for determining the long-time asymptotics. Let $N>1$ be given, and let $\mathcal{I}$ denote the interval $\mathcal{I}=(0,N]$. The jump matrix $J$ defined in \eqref{4.2} involves the exponentials $\e^{\pm t\Phi}$, where $\Phi(\xi,k)$ is defined by
$$\Phi(\xi,k)=4\ii k^2+2\ii\xi k,\quad \xi\in\mathcal{I},\quad k\in\bfC.$$
It follows that there is a single stationary point located at the point where $\frac{\partial\Phi}{\partial k}=0$,
i.e., at $k=k_0=-\frac{\xi}{4}$.

\begin{figure}[htbp]
  \centering
  \includegraphics[width=4in]{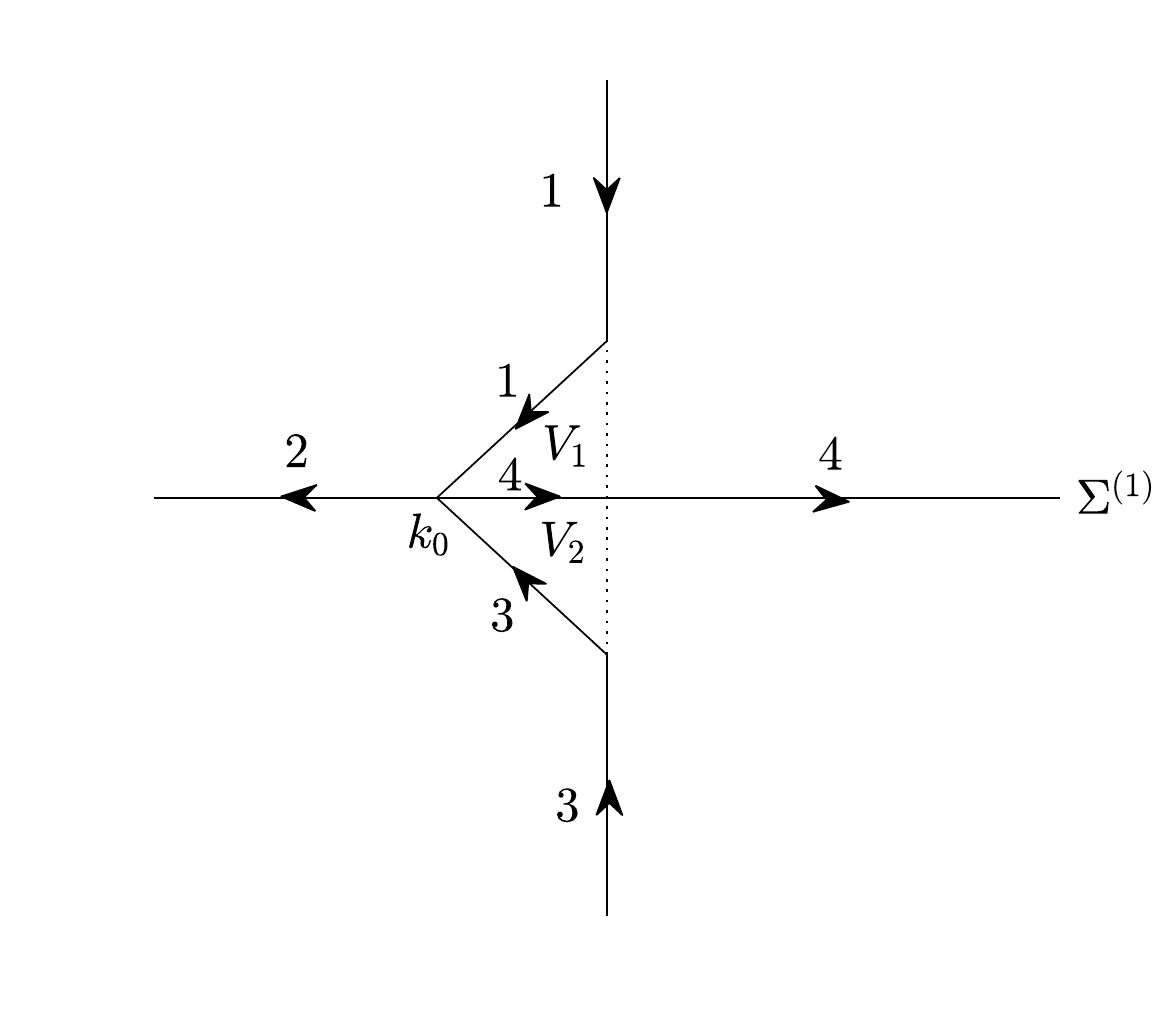}
  \caption{The contour $\Sigma^{(1)}$ in the complex $k$-plane.}\label{fig3}
\end{figure}

The first transformation is to deform the vertical part of $\Sigma$ so that it passes through the critical point $k_0$. Letting $V_1$ and $V_2$ denote the triangular domains shown in Fig. \ref{fig3}. The first transform is :
\be\label{4.6}
M^{(1)}(x,t;k)=M(x,t;k)\times\left\{
\begin{aligned}
&\begin{pmatrix}
1 & 0\\[4pt]
\Gamma(k)\e^{t\Phi(\xi,k)} & 1
\end{pmatrix},~~~~k\in V_1,\\
&\begin{pmatrix}
1 &~ \overline{\Gamma(\bar{k})}\e^{-t\Phi(\xi,k)}\\[4pt]
0 & 1
\end{pmatrix},~~k\in V_2,\\
&I,\qquad\qquad\qquad\qquad~~ \text{elsewhere}.
\end{aligned}
\right.
\ee
Then we obtain the RH problem
\be\label{4.7}
M^{(1)}_+(x,t;k)=M^{(1)}_-(x,t;k)J^{(1)}(x,t,k)
\ee
on the contour $\Sigma^{(1)}$ depicted in Fig. \ref{fig3}. The jump matrix $J^{(1)}(x,t,k)$ is given by
\bea
J^{(1)}_1&=&\begin{pmatrix}
1 & 0\\[4pt]
\Gamma\e^{t\Phi} & 1
\end{pmatrix},\qquad J^{(1)}_2=\begin{pmatrix}
1 &~ -\bar{r}\e^{-t\Phi}\\[4pt]
r\e^{t\Phi} ~& 1-r\bar{r} \\
\end{pmatrix},\nn\\
J^{(1)}_3&=&\begin{pmatrix}
1 &~ -\bar{\Gamma}\e^{-t\Phi}\\[4pt]
0 & 1
\end{pmatrix}, J^{(1)}_4=\begin{pmatrix}
1-r_1\bar{r}_1 ~& \bar{r}_1\e^{-t\Phi} \\[4pt]
-r_1\e^{t\Phi} ~& 1 \\
\end{pmatrix},\nn
\eea
where $J^{(1)}_i$ denotes the restriction of $J^{(1)}$ to the contour labeled by $i$ in Fig. \ref{fig3}.

The next transformation is:
\be\label{4.8}
M^{(2)}(x,t;k)=M^{(1)}(x,t;k)(x,t;k)\delta^{-\sigma_3}(\xi,k),
\ee
where the complex-valued function $\delta(\xi,k)$ is given by
\be\label{4.9}
\delta(\xi,k)=\exp\bigg\{\frac{1}{2\pi\ii}\int_{-\infty}^{k_0}\frac{\ln(1-|r(s)|^2)}{s-k}\dd s\bigg\},\quad k\in\bfC\setminus(-\infty,k_0].
\ee
The function $\delta$ satisfies the following jump condition across the real axis:
\berr
\delta_+(\xi,k)=\left\{
\begin{aligned}
&\frac{\delta_-(\xi,k)}{1-|r(k)|^2},~~k<k_0,\\
&\delta_-(\xi,k),\qquad k>k_0.
\end{aligned}
\right.
\eerr
Moreover, $\delta(\xi,k)\rightarrow1$ as $k\rightarrow\infty$, $\delta(\xi,k)$ and $\delta^{-1}(\xi,k)$ are bounded and analytic functions of $k\in\bfC\setminus(-\infty,k_0]$ with continuous boundary values on $(-\infty,k_0]$.

Then $M^{(2)}(x,t;k)$ satisfies the following RH problem
\be\label{4.10}
M^{(2)}_+(x,t;k)=M^{(2)}_-(x,t;k)J^{(2)}(x,t,k)
\ee
with the contour $\Sigma^{(2)}=\Sigma^{(1)}$ and the jump matrix $J^{(2)}=\delta_-^{\sigma_3}J^{(1)}\delta_+^{-\sigma_3}$, namely,
\bea
J^{(2)}_1&=&\begin{pmatrix}
1 & 0\\[4pt]
\Gamma\delta^{-2}\e^{t\Phi} & 1
\end{pmatrix},\qquad J^{(2)}_2=\begin{pmatrix}
1 ~& -r_2\delta_-^{2}\e^{-t\Phi} \\[4pt]
0 ~& 1 \\
\end{pmatrix}\begin{pmatrix}
1 ~& 0 \\[4pt]
\bar{r}_2\delta_+^{-2}\e^{t\Phi} ~& 1 \\
\end{pmatrix},\nn\\
J^{(2)}_3&=&\begin{pmatrix}
1 &~ -\bar{\Gamma}\delta^{2}\e^{-t\Phi}\\[4pt]
0 & 1
\end{pmatrix}, J^{(2)}_4=\begin{pmatrix}
1 ~& \bar{r}_1\delta^{2}\e^{-t\Phi} \\[4pt]
0 ~& 1 \\
\end{pmatrix}\begin{pmatrix}
1 ~& 0 \\[4pt]
-r_1\delta^{-2}\e^{t\Phi} ~& 1 \\
\end{pmatrix},\nn
\eea
where we define $r_2(k)$ by
\be\label{4.11}
r_2(k)=\frac{\overline{r(\bar{k})}}{1-r(k)\overline{r(\bar{k})}}.
\ee

Before processing the next deformation, we follow the idea of \cite{JL1,JL2} and decompose each of the functions $\Gamma$, $r_1$, $r_2$ into an analytic part and a small remainder because the spectral functions have limited domains of analyticity. The analytic part of the jump matrix will be deformed, whereas the small remainder will be left on the original contour. In face, we have the following lemmas.
\begin{lemma}\label{lem1}
There exist a decomposition
\berr
\Gamma(k)=\Gamma_a(t,k)+\Gamma_r(t,k),\quad t>0,~~k\in\ii\bfR_+,
\eerr
where the functions $\Gamma_a$ and $\Gamma_r$ have the following properties:

(1) For each $t>0$, $\Gamma_a(t,k)$ is defined and continuous for $k\in\bar{D}_1$ and analytic for $k\in D_1.$

(2) For each $\xi\in\mathcal{I}$ and each $t>0$, the function $\Gamma_a(t,k)$ satisfies
\be\label{4.12}
\begin{aligned}
|\Gamma_a(t,k)-\Gamma(0)|&\leq C\e^{\frac{t}{4}|\text{Re}\Phi(\xi,k)|},\\
|\Gamma_a(t,k)|&\leq \frac{C}{1+|k|}\e^{\frac{t}{4}|\text{Re}\Phi(\xi,k)|},
\end{aligned}
\ee
for $k\in\bar{D}_1$, where the constant $C$ is independent of $\xi, k, t$.

(3) The $L^1, L^2$ and $L^\infty$ norms of the function $\Gamma_r(t,\cdot)$ on $\ii\bfR_+$ are $O(t^{-3/2})$ as $t\rightarrow\infty$.
\end{lemma}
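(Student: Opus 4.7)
Following the analytic decomposition technique used in \cite{JL1,JL2} for IBV problems, we split $\Gamma$ on $\ii\bfR_+$ into a piece that extends analytically into $D_1$ within the permitted growth envelope of \eqref{4.12}, plus a remainder that decays in all three norms. \emph{Rational subtraction at infinity:} using the expansion \eqref{4.4}, fix $n\ge 6$ and choose constants $c_0,c_1,\ldots,c_n$ such that the rational function
\[
R(k) = c_0 + \sum_{j=1}^n \frac{c_j}{(k+\ii)^j}
\]
matches $\Gamma(k)$ to $O(k^{-n-1})$ as $k\to\infty$ and satisfies $R(0)=\Gamma(0)$; these are triangular linear conditions on the $c_j$. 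Since $R$ has its only pole at $k=-\ii\in D_4$, it is analytic on $\bar D_1\cup\bar D_2$ with $|R(k)-R(\infty)|\le C/(1+|k|)$. The residual $h(k):=\Gamma(k)-R(k)$ is smooth on $\bar D_2$, analytic in $D_2$, vanishes at $k=0$, and is $O(|k|^{-n-1})$ as $k\to\infty$.

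\emph{Analytic decomposition of the residual:} parametrize $\ii\bfR_+$ by $k=\ii s$ and expand $h(\ii s)$ in its $C^\infty$ Taylor series at $s=0$ up to a fixed order $m$ comparable to $n$; the polynomial part, a polynomial in $k$ of degree $m$, is entire, while the Taylor remainder $\rho$ vanishes to order $m+1$ at $s=0$. Following \cite{JL1,JL2}, combine the rational function $R$, the Taylor polynomial (arranged to blend with $R$ at infinity to avoid unwanted growth), and a Cauchy-type integral representation of $\rho$ along a $t$-dependent contour $\Sigma_t\subset D_1$ that hugs $\ii\bfR_+$ at a distance controlled by $t$ and $\xi$. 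The resulting $\Gamma_a(t,k)$ is analytic in $D_1$ with continuous boundary values on $\bar D_1$, and we set $\Gamma_r(t,k):=\Gamma(k)-\Gamma_a(t,k)$.

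\emph{Bounds:} derive \eqref{4.12} from the explicit form of $\Gamma_a$. The factor $1/(1+|k|)$ in the second bound comes from $R$; the normalization $R(0)=\Gamma(0)$ yields the first bound; and the exponential envelope $\e^{\frac{t}{4}|\text{Re}\,\Phi|}$ arises from the $t$-dependence of $\Sigma_t$ combined with the identity $|\text{Re}\,\Phi(\xi,k)|=2\,\text{Im}\,k\,(4\,\text{Re}\,k+\xi)$, which governs the allowable growth of the Cauchy integral as $k$ crosses $\ii\bfR_+$ into $D_1$. For $\Gamma_r$ on $\ii\bfR_+$, repeated integration by parts in the Cauchy representation trades each order of smoothness of $h$ for a factor of $t^{-1/2}$; choosing $n$ (and $m$) sufficiently large then produces the desired $O(t^{-3/2})$ bound in all three norms.

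The main technical obstacle is the delicate coordination in the analytic-decomposition step: the interplay between the polynomial piece, the rational piece, and the shape of the $t$-dependent Cauchy contour must be arranged so that $\Gamma_a$ satisfies both inequalities in \eqref{4.12} uniformly for $\xi\in\mathcal I$, while $\Gamma_r$ simultaneously enjoys $O(t^{-3/2})$ decay in $L^1,L^2,L^\infty$. This balance is the one carried out for analogous spectral functions in \cite{JL1,JL2}; the only nontrivial adaptation for the KE setting is the explicit form of $\Phi(\xi,k)=4\ii k^2+2\ii\xi k$, which poses no new difficulties since its real part in $\bar D_1$ admits the simple factorization noted above.
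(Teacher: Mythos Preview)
Your proposal has the right overall shape---subtract a rational function with pole at $-\ii$ and then decompose the remainder---but the central mechanism you describe is neither the technique actually used in \cite{JL1,JL2,JL3} nor concrete enough to stand as a proof.

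The paper does \emph{not} use a Cauchy integral along a $t$-dependent contour. After subtracting a single rational function $f_0(k)=\sum_{j=1}^{8}a_j(k+\ii)^{-j}$ chosen to match $\Gamma$ \emph{simultaneously} to fourth order at $0$ and to third order at $\infty$, one passes to the variable $\psi=4k^2$ (a bijection $\ii\bfR_+\to(-\infty,0]$), sets $F(\psi)=(k+\ii)^2\big(\Gamma(k)-f_0(k)\big)$ extended by zero to $\psi>0$, and checks $F\in H^2(\bfR)$. The decomposition is then obtained by splitting the inverse Fourier integral at $s=-t/4$:
\[
f_a(t,k)=\frac{1}{(k+\ii)^2}\int_{-t/4}^{\infty}\hat F(s)\,\e^{4\ii k^2 s}\,\dd s,\qquad
f_r(t,k)=\frac{1}{(k+\ii)^2}\int_{-\infty}^{-t/4}\hat F(s)\,\e^{4\ii k^2 s}\,\dd s.
\]
Analyticity of $f_a$ in $D_1$ is immediate because $\text{Re}(4\ii k^2)\le 0$ there; the exponential bound in \eqref{4.12} follows from $|\text{Re}\,4\ii k^2|\le|\text{Re}\,\Phi(\xi,k)|$ on $\bar D_1$ together with $\sup_{s\ge -t/4}\e^{s\,\text{Re}\,4\ii k^2}\le \e^{\frac{t}{4}|\text{Re}\,\Phi|}$; and the $O(t^{-3/2})$ decay of $f_r$ comes from $\|s^2\hat F\|_{L^2}<\infty$ (Plancherel) and $\int_{-\infty}^{-t/4}s^{-4}\dd s=O(t^{-3})$.

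Your sketch departs from this in two substantive ways. First, you separate the matching at $0$ and at $\infty$ into a Taylor polynomial plus a rational piece; the polynomial grows at infinity and, as you yourself note, must be ``blended with $R$'' to avoid destroying the $1/(1+|k|)$ bound---a step you never carry out. The paper avoids this entirely by letting one rational function with eight free coefficients handle both expansions. Second, and more seriously, ``Cauchy-type integral along a $t$-dependent contour $\Sigma_t$'' together with ``repeated integration by parts trades each order of smoothness for a factor of $t^{-1/2}$'' is not a recognizable mechanism here: you do not specify what integrand sits on $\Sigma_t$, why the resulting function is analytic in all of $D_1$ rather than only up to $\Sigma_t$, or in which variable integration by parts is performed so as to generate powers of $t^{-1/2}$. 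In the actual construction the $t$-dependence enters solely through the Fourier cutoff point $-t/4$, not through any contour geometry, and the $t^{-3/2}$ decay is a direct consequence of $s^2\hat F\in L^2$ rather than of integration by parts.
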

\begin{proof}
Since $\Gamma(k)\in C^5(\ii\bfR_+)$, then there exist complex constants $\{p_j\}_0^4$ such that
\be\label{4.13}
\Gamma^{(n)}(k)=\frac{\dd^n}{\dd k^n}\bigg(\sum_{j=0}^4p_jk^j\bigg)+O(k^{5-n}),\quad k\rightarrow0,~k\in\ii\bfR_+,~n=0,1,2.
\ee
In fact, $p_j=\frac{\Gamma^{(j)}(0)}{j!}$ for $j=0,1,\ldots,4$. On the other hand, we have
\be\label{4.14}
\Gamma^{(n)}(k)=\frac{\dd^n}{\dd k^n}\bigg(\sum_{j=1}^3\Gamma_jk^{-j}\bigg)+O(k^{-4-n}),\quad k\rightarrow\infty,~k\in\ii\bfR_+,~n=0,1,2.
\ee
Let
\be\label{4.15}
f_0(k)=\sum_{j=1}^8\frac{a_j}{(k+\ii)^j},
\ee
where $\{a_j\}_1^8$ are complex such that
\be\label{4.16}
f_0(k)=\left\{
\begin{aligned}
&\sum_{j=0}^4p_jk^j+O(k^5),\qquad ~k\rightarrow0,\\
&\sum_{j=1}^3\Gamma_jk^{-j}+O(k^{-4}),\quad k\rightarrow\infty.
\end{aligned}
\right.
\ee
It is easy to verify that \eqref{4.16} imposes eight linearly independent conditions on the $a_j$, hence the coefficients $a_j$ exist and are unique. Letting $f=\Gamma-f_0$, it follows that \\
(i) $f_0(k)$ is a rational function of $k\in\bfC$ with no poles in $\bar{D}_1$;\\
(ii) $f_0(k)$ coincides with $\Gamma(k)$ to four order at 0 and to third order at $\infty$, more precisely,
\be\label{4.17}
\frac{\dd^n}{\dd k^n}f(k)=\left\{
\begin{aligned}
&O(k^{5-n}),~\quad k\rightarrow0,\\
&O(k^{-4-n}),~~ k\rightarrow\infty,
\end{aligned}
\quad k\in\ii\bfR_+,~n=0,1,2.
\right.
\ee
The decomposition of $\Gamma(k)$ can be derived as follows. The map $k\mapsto\psi=\psi(k)$ defined by $\psi(k)=4k^2$ is a bijection $[0,\ii\infty)\mapsto(-\infty,0]$, so we may define a function $F$ : $\bfR\rightarrow\bfC$ by
\be\label{4.18}
F(\psi)=\left\{
\begin{aligned}
&(k+\ii)^2f(k),~\psi\leq0,\\
&0,~~\qquad\qquad~ \psi>0.
\end{aligned}
\right.
\ee
$F(\psi)$ is $C^5$ for $\psi\neq0$ and
\berr
F^{(n)}(\psi)=\bigg(\frac{1}{8k}\frac{\partial}{\partial k}\bigg)^n\bigg((k+\ii)^2f(k)\bigg),~~\psi\leq0.
\eerr
By \eqref{4.17}, $F\in C^2(\bfR)$ and $F^{(n)}(\psi)=O(|\psi|^{-1-n})$ as $|\psi|\rightarrow\infty$ for $n=0,1,2$. In particular,
\be\label{4.19}
\bigg\|\frac{\dd^nF}{\dd\psi^n}\bigg\|_{L^2(\bfR)}<\infty,\quad n=0,1,2,
\ee
that is, $F$ belongs to $H^2(\bfR)$. By the Fourier transform $\hat{F}(s)$ defined by
\be\label{4.20}
\hat{F}(s)=\frac{1}{2\pi}\int_\bfR F(\psi)\e^{-\ii\psi s}\dd\psi
\ee
where
\be\label{4.21}
F(\psi)=\int_\bfR\hat{F}(s)\e^{\ii\psi s}\dd s,
\ee
it follows from Plancherel theorem that $\|s^2\hat{F}(s)\|_{L^2(\bfR)}<\infty$. Equations \eqref{4.18} and \eqref{4.21} imply
\be
f(k)=\frac{1}{(k+\ii)^2}\int_\bfR\hat{F}(s)\e^{\ii\psi s}\dd s,\quad k\in\ii\bfR_+.
\ee
Writing
$$f(k)=f_a(t,k)+f_r(t,k),\quad t>0,~k\in\ii\bfR_+,$$
where the functions $f_a$ and $f_r$ are defined by
\bea
f_a(t,k)&=&\frac{1}{(k+\ii)^2}\int_{-\frac{t}{4}}^\infty\hat{F}(s)\e^{4\ii k^2s}\dd s,\quad t>0,~k\in\bar{D}_1,\\
f_r(t,k)&=&\frac{1}{(k+\ii)^2}\int^{-\frac{t}{4}}_{-\infty}\hat{F}(s)\e^{4\ii k^2s}\dd s,\quad t>0,~k\in\ii\bfR_+,
\eea
we infer that $f_a(t,\cdot)$ is continuous in $\bar{D}_1$ and analytic in $D_1$. Moreover, since $|\text{Re}4\ii k^2|\leq|\text{Re}\Phi(\xi,k)|$ for $k\in\bar{D}_1$ and $\xi\in\mathcal{I}$, we can get
\bea\label{4.25}
|f_a(t,k)|&\leq&\frac{1}{|k+\ii|^2}\|\hat{F}(s)\|_{L^1(\bfR)}\sup_{s\geq-\frac{t}{4}}\e^{s\text{Re}4\ii k^2}\nn\\
&\leq&\frac{C}{1+|k|^2}\e^{\frac{t}{4}|\text{Re}\Phi(\xi,k)|},\quad t>0,~k\in\bar{D}_1,~\xi\in\mathcal{I}.
\eea
Furthermore, we have
\bea\label{4.26}
|f_r(t,k)|&\leq&\frac{1}{|k+\ii|^2}\int_{-\infty}^{-\frac{t}{4}}s^2|\hat{F}(s)|s^{-2}\dd s\nn\\
&\leq&\frac{C}{1+|k|^2}\|s^2\hat{F}(s)\|_{L^2(\bfR)}\sqrt{\int_{-\infty}^{-\frac{t}{4}}s^{-4}\dd s},\\
&\leq&\frac{C}{1+|k|^2}t^{-3/2},\quad t>0,~k\in\ii\bfR_+,~\xi\in\mathcal{I}.\nn
\eea
Hence, the $L^1,L^2$ and $L^\infty$ norms of $f_r$ on $\ii\bfR_+$ are $O(t^{-3/2})$. Letting
\bea
\Gamma_a(t,k)&=&f_0(k)+f_a(t,k),\quad t>0,~ k\in\bar{D}_1,\\
\Gamma_r(t,k)&=&f_r(t,k),~~\qquad\qquad t>0,~ k\in\ii\bfR_+,
\eea
we find a decomposition of $\Gamma$ with the properties listed in the statement of the lemma.
\end{proof}

We introduce the open subsets $\{\Omega_j\}_1^8$, as displayed in Fig. \ref{fig4}. The following lemma describes how to decompose $r_j$, $j=1,2$ into an analytic part $r_{j,a}$ and a small remainder $r_{j,r}$. A proof can be found in \cite{JL3}.

\begin{figure}[htbp]
  \centering
  \includegraphics[width=4in]{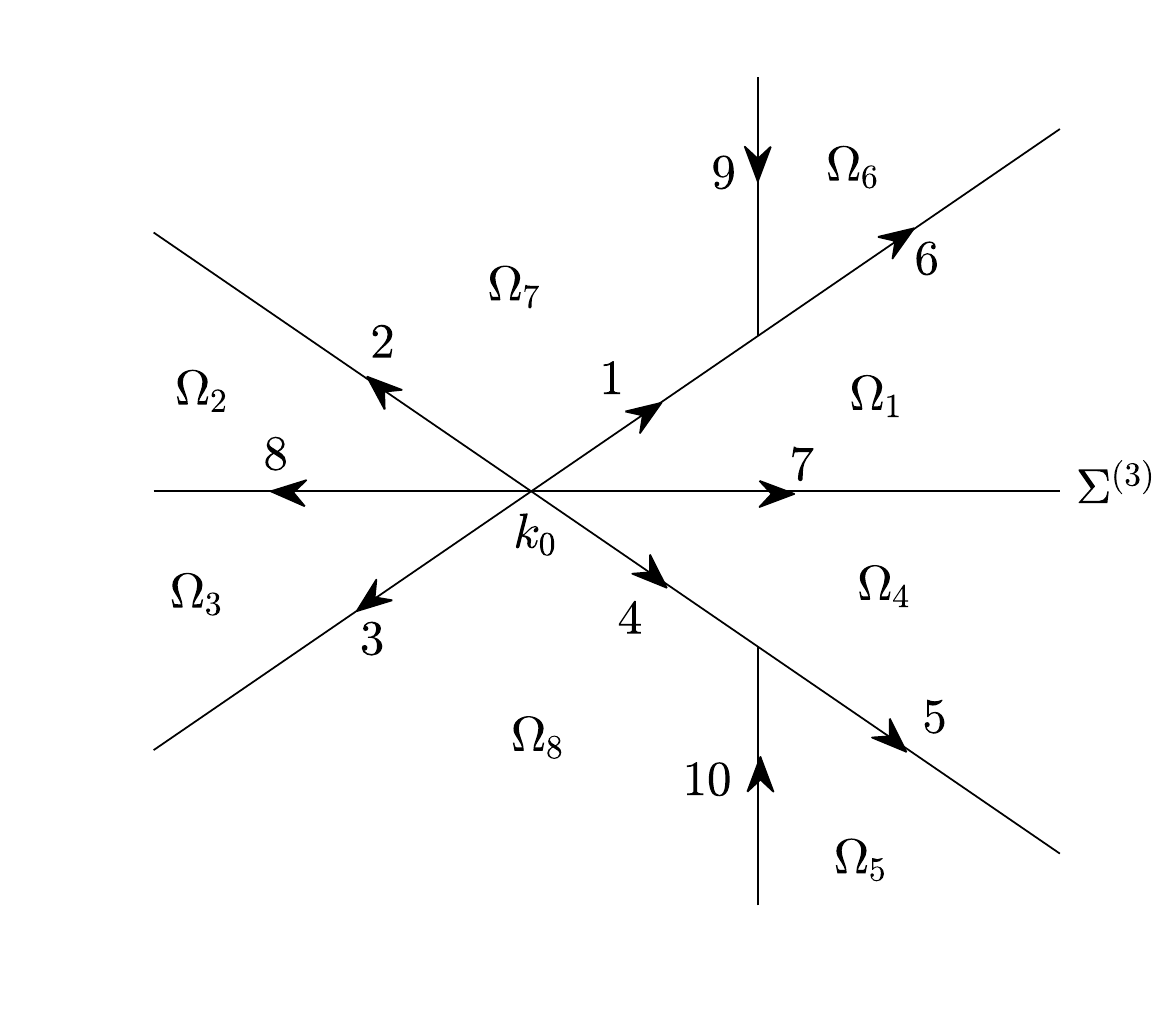}
  \caption{The contour $\Sigma^{(3)}$ and the open sets $\{\Omega_j\}_1^8$ in the complex $k$-plane.}\label{fig4}
\end{figure}
\begin{lemma}\label{lem2}
There exist decompositions
\be
\begin{aligned}
r_1(k)&=r_{1,a}(x,t,k)+r_{1,r}(x,t,k),\quad k>k_0,\\
r_2(k)&=r_{2,a}(x,t,k)+r_{2,r}(x,t,k),\quad k<k_0,
\end{aligned}
\ee
where the functions $\{r_{j,a}, r_{j,r}\}^2_1$ have the following properties:

(1) For each $\xi\in\mathcal{I}$ and each $t>0$, $r_{j,a}(x,t,k)$ is defined and continuous for $k\in\bar{\Omega}_j$ and
analytic for $\Omega_j$, $j= 1,2$.

(2) The functions $r_{1,a}$ and $r_{2,a}$ satisfy
\be\label{4.30}
\begin{aligned}
|r_{j,a}(x,t,k)-r_j(k_0)|&\leq C|k-k_0|\e^{\frac{t}{4}|\text{Re}\Phi(\xi,k)|},\\
|r_{j,a}(x,t,k)|&\leq \frac{C}{1+|k|}\e^{\frac{t}{4}|\text{Re}\Phi(\xi,k)|},
\end{aligned}
\quad t>0,~k\in\bar{\Omega}_j,~\xi\in\mathcal{I},~j=1,2,
\ee
where the constant $C$ is independent of $\xi, k, t$.

(3) The $L^1, L^2$ and $L^\infty$ norms of the function $r_{1,r}(x,t,\cdot)$ on $(k_0,\infty)$ are $O(t^{-3/2})$ as $t\rightarrow\infty$ uniformly with respect to $\xi\in\mathcal{I}$.

(4) The $L^1, L^2$ and $L^\infty$ norms of the function $r_{2,r}(x,t,\cdot)$ on $(-\infty,k_0)$ are $O(t^{-3/2})$ as $t\rightarrow\infty$ uniformly with respect to $\xi\in\mathcal{I}$.
\end{lemma}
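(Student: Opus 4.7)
The plan is to follow the same scheme as the proof of Lemma \ref{lem1}, but with the Taylor expansion performed around the stationary point $k_0$ instead of the origin, and with $\psi=4k^2$ replaced by $\psi=4(k-k_0)^2$. The analogy is in fact tighter than in Lemma \ref{lem1}, because near the critical point the identity
\[
\Phi(\xi,k) = -4\ii k_0^2 + 4\ii(k-k_0)^2
\]
gives $\text{Re}\,\Phi(\xi,k) = \text{Re}[4\ii(k-k_0)^2]$ exactly, so the exponential control one obtains from the Fourier truncation is automatic in each of the sectors $\Omega_j$.

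First, I would extract the Taylor expansion of $r_j$ at $k=k_0$ up to some fixed order, which is possible since $r_j$ inherits smoothness from the spectral functions on $\bfR$, together with the asymptotic expansion of $r_j(k)$ as $|k|\to\infty$ provided by \eqref{4.5}. I then build a rational function
\[
r_j^0(k) = \sum_{\ell=1}^{8}\frac{a_\ell^{(j)}}{(k-k_0+\ii\epsilon_j)^\ell},
\]
where the sign $\epsilon_j\in\{+1,-1\}$ is chosen so that the single pole lies outside $\bar\Omega_j$, and the eight coefficients $a_\ell^{(j)}$ are fixed uniquely by matching the leading terms of both expansions (five conditions at $k_0$, three at $\infty$). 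Setting $f_j = r_j - r_j^0$, the remainder vanishes to fifth order at $k_0$ and decays like $|k|^{-4}$ at infinity along the real line, which is precisely the regularity required for the Fourier step.

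Next, using the bijection $\psi = 4(k-k_0)^2$ from $[k_0,\infty)$ (respectively from $(-\infty,k_0]$) onto $[0,\infty)$, I set
\[
F_j(\psi) = \begin{cases} (k-k_0+\ii\epsilon_j)^2 f_j(k), & \psi \le 0,\\ 0, & \psi > 0, \end{cases}
\]
after extending $\psi$ to a signed parameter as in \eqref{4.18}, and verify $F_j\in H^2(\bfR)$ exactly as in the proof of Lemma \ref{lem1}. Inverting the Fourier transform and splitting the $s$-integral at $s=-t/4$, I define
\[
r_{j,a}(x,t,k) = r_j^0(k) + \frac{1}{(k-k_0+\ii\epsilon_j)^2}\int_{-t/4}^{\infty}\hat F_j(s)\,\e^{4\ii s(k-k_0)^2}\,\dd s,
\]
and let $r_{j,r}$ be the complementary piece with the integral taken over $(-\infty,-t/4)$. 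Analyticity of $r_{j,a}$ in $\Omega_j$ follows from the sign of $\text{Re}[4\ii s(k-k_0)^2]$ for $s\ge -t/4$ in the chosen sector; the bound \eqref{4.30} follows from $|\text{Re}[4\ii(k-k_0)^2]| = |\text{Re}\,\Phi(\xi,k)|$ together with $\|\hat F_j\|_{L^1}<\infty$, while the Lipschitz bound comes from comparing $r_{j,a}$ with its value at $k=k_0$; the $O(t^{-3/2})$ estimates on $r_{j,r}$ are obtained by Cauchy--Schwarz applied to $\int_{-\infty}^{-t/4} s^2 |\hat F_j(s)|\cdot s^{-2}\,\dd s$ exactly as in \eqref{4.26}.

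The main obstacle I anticipate is geometric bookkeeping: one must pick the sign $\epsilon_j$ and the precise extension of $\psi\mapsto k$ so that the analyticity of $r_{j,a}$ is produced on the correct side of $\Sigma^{(3)}$ in Fig.~\ref{fig4}, and one must check that $\text{Re}[4\ii s(k-k_0)^2] \le (t/4)\,|\text{Re}\,\Phi(\xi,k)|$ for $s\ge -t/4$ uniformly on $\bar\Omega_j$ with constants independent of $\xi\in\mathcal{I}=(0,N]$. This uniformity in $\xi$, rather than pointwise control, is what makes the $O(t^{-3/2})$ decay of $r_{j,r}$ useful in the subsequent steepest descent analysis, and it is the only place where the translation by the moving stationary point $k_0$ has to be handled with care.
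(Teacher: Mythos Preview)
Your proposal is correct and is precisely the argument the paper has in mind: the paper does not give an independent proof of this lemma but simply refers to \cite{JL3}, and the construction there is exactly the adaptation of the Lemma~\ref{lem1} scheme to the moving stationary point $k_0$ that you outline (rational approximant with pole at $k_0\mp\ii$, change of variable $\psi=4(k-k_0)^2$, Fourier truncation at $s=-t/4$). Your observation that $\text{Re}\,\Phi(\xi,k)=\text{Re}[4\ii(k-k_0)^2]$ exactly, so the exponential bound and the uniformity in $\xi\in\mathcal{I}$ are automatic on the sectors $\Omega_j$, is the key point and removes the only nontrivial issue you flagged.
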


The purpose of the next transformation is to deform the contour so that the jump matrix involves the exponential factor $\e^{-t\Phi}$ on the parts of the contour where Re$\Phi$ is positive and the factor $\e^{t\Phi}$ on the parts where Re$\Phi$ is negative according to the signature table for Re$\Phi$ as shown in Fig. \ref{fig5}. More precisely, we put
\be\label{4.31}
M^{(3)}(x,t;k)=M^{(2)}(x,t;k)G(k),
\ee
where
\be\label{4.32}
G(k)=\left\{
\begin{aligned}
&\begin{pmatrix}
1 &~ 0\\[4pt]
r_{1,a}\delta^{-2}\e^{t\Phi} ~& 1
\end{pmatrix},~\quad k\in\Omega_1,\\
&\begin{pmatrix}
1 ~& -r_{2,a}\delta^{2}\e^{-t\Phi}\\[4pt]
0 ~& 1
\end{pmatrix},\quad k\in\Omega_2,\\
&\begin{pmatrix}
1 ~& 0\\[4pt]
-\bar{r}_{2,a}\delta^{-2}\e^{t\Phi} ~& 1
\end{pmatrix},\quad k\in\Omega_3,\\
&\begin{pmatrix}
1 ~& \bar{r}_{1,a}\delta^2\e^{-t\Phi}\\[4pt]
0 ~& 1
\end{pmatrix},~~\quad k\in\Omega_4,\\
&\begin{pmatrix}
1 ~& -\bar{\Gamma}_{a}\delta^{2}\e^{-t\Phi} \\[4pt]
0 ~& 1
\end{pmatrix},~\quad k\in\Omega_5,\\
&\begin{pmatrix}
1 ~& 0\\[4pt]
-\Gamma_{a}\delta^{-2}\e^{t\Phi} ~& 1
\end{pmatrix},~\quad k\in\Omega_6,\\
&I,~~~~\qquad\qquad\qquad\quad k\in\Omega_7\cup\Omega_8.
\end{aligned}
\right.
\ee
Then the matrix $M^{(3)}(x,t;k)$ satisfies the following RH problem
\be\label{4.33}
M_+^{(3)}(x,t;k)=M_-^{(3)}(x,t;k)J^{(3)}(x,t,k),
\ee
with the jump matrix $J^{(3)}=G_+^{-1}(k)J^{(2)}G_-(k)$ is given by
\bea
J^{(3)}_1&=&\begin{pmatrix}
1 & 0\\[4pt]
-(r_{1,a}+\Gamma)\delta^{-2}\e^{t\Phi} & 1
\end{pmatrix},\quad J^{(3)}_2=\begin{pmatrix}
1 &~ -r_{2,a}\delta^2\e^{-t\Phi}\\
0~& 1
\end{pmatrix},\quad J^{(3)}_3=\begin{pmatrix}
1 &~ 0\\
\bar{r}_{2,a}\delta^{-2}\e^{t\Phi} ~& 1
\end{pmatrix},\nn\\
J^{(3)}_4&=&\begin{pmatrix}
1 &~ (\bar{r}_{1,a}+\bar{\Gamma})\delta^2\e^{-t\Phi}\\
0~& 1
\end{pmatrix},\quad~
J^{(3)}_5=\begin{pmatrix}
1 &~ (\bar{r}_{1,a}+\bar{\Gamma}_a)\delta^2\e^{-t\Phi}\\
0~& 1
\end{pmatrix},\nn\\
J^{(3)}_6&=&\begin{pmatrix}
1 &~ 0\\
-(r_{1,a}+\Gamma_a)\delta^{-2}\e^{t\Phi}~& 1
\end{pmatrix},\quad
J^{(3)}_7=\begin{pmatrix}
1 ~& \bar{r}_{1,r}\delta^{2}\e^{-t\Phi} \\[4pt]
0 ~& 1 \\
\end{pmatrix}\begin{pmatrix}
1 ~& 0 \\[4pt]
-r_{1,r}\delta^{-2}\e^{t\Phi} ~& 1 \\
\end{pmatrix},\nn\\
J^{(3)}_8&=&\begin{pmatrix}
1 ~& -r_{2,r}\delta_-^{2}\e^{-t\Phi} \\[4pt]
0 ~& 1 \\
\end{pmatrix}\begin{pmatrix}
1 ~& 0 \\[4pt]
\bar{r}_{2,r}\delta_+^{-2}\e^{t\Phi} ~& 1 \\
\end{pmatrix},\quad
J^{(3)}_9=\begin{pmatrix}
1 &~ 0\\[4pt]
\Gamma_r\delta^{-2}\e^{t\Phi} & 1
\end{pmatrix}, \quad J^{(3)}_{10}=\begin{pmatrix}
1 &~ -\bar{\Gamma}_r\delta^{2}\e^{-t\Phi}\\[4pt]
0 & 1
\end{pmatrix},\nn
\eea
with $J^{(3)}_i$ denoting the restriction of $J^{(3)}$ to the contour labeled by $i$ in Fig. \ref{fig4}.

\begin{figure}[htbp]
  \centering
  \includegraphics[width=3.3in]{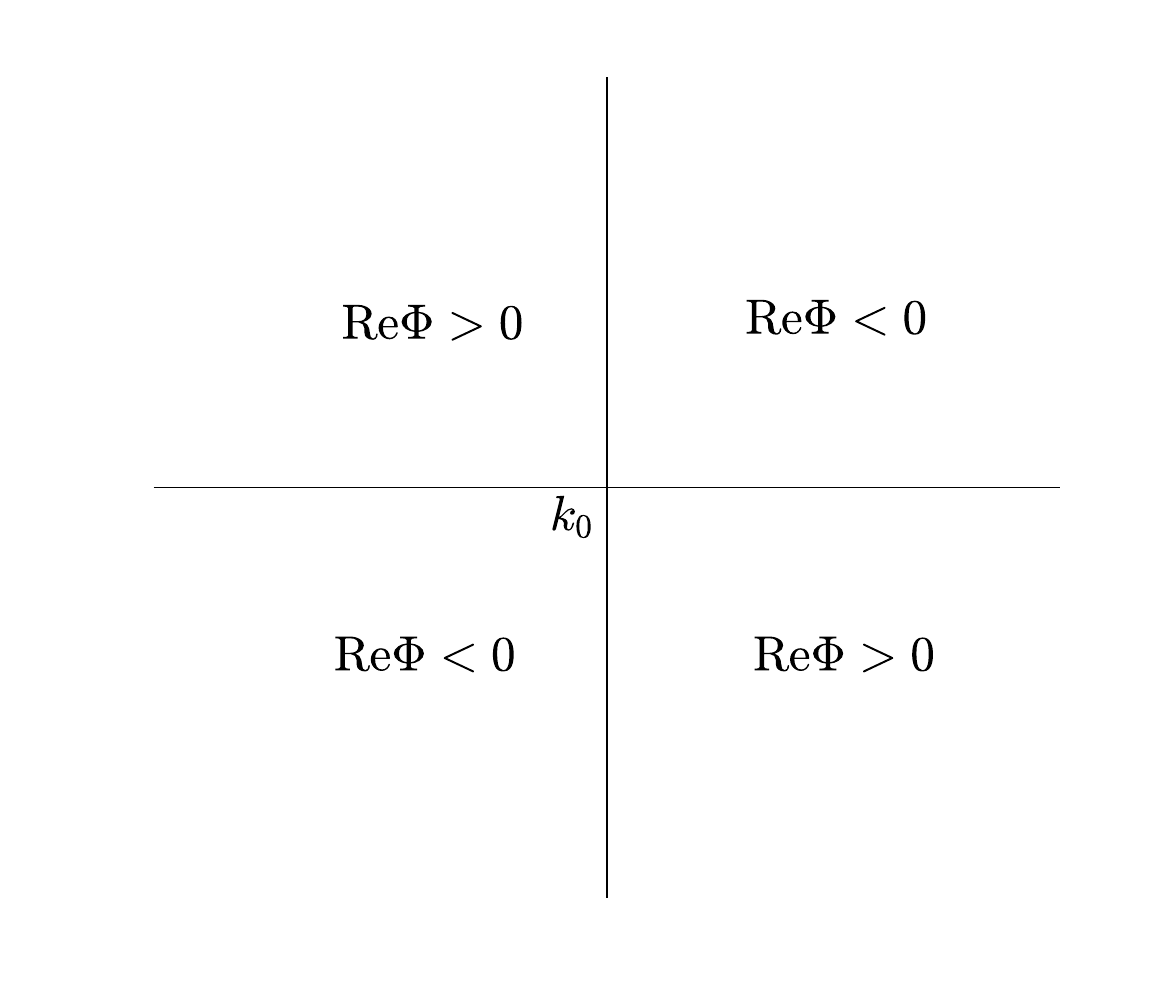}
  \caption{The signature table of Re$\Phi$.}\label{fig5}
\end{figure}

It is easy to check that the jump matrix $J^{(3)}$ decays to identity matrix $I$ as $t\rightarrow\infty$ everywhere except near $k_0$. Thus, the main contribution to the long-time asymptotics should come from a neighborhood of the stationary phase point $k_0$.

To focus on $k_0$, we make a local change of variables for $k$ near $k_0$ and introduce the new variable $z=z(\xi,k)$ by $$z=\sqrt{8t}(k-k_0).$$ Let $D_\varepsilon(k_0)$ denote the open disk of radius $\varepsilon$ centered at $k_0$ for a small $\varepsilon>0$. Then, the map $k\mapsto z$ is a bijection from $D_\varepsilon(k_0)$ to the open disk of radius $\sqrt{8t}\varepsilon$ centered at the origin for all $\xi\in\mathcal{I}$. Integrating by parts in formula \eqref{4.9} yields,
\be\label{4.34}
\delta(\xi,k)=\e^{\ii\nu\ln(k-k_0)}\e^{\chi(k)}=(k-k_0)^{\ii\nu}\e^{\chi(k)},
\ee
where
\bea
\nu&=&\nu(\xi)=-\frac{1}{2\pi}\ln(1-|r(k_0)|^2)>0,\label{4.35}\\
\chi(k)&=&-\frac{1}{2\pi\ii}\int_{-\infty}^{k_0}\ln(k-s)\dd\ln(1-|r(s)|^2).\label{4.36}
\eea
Hence we can write $\delta$ as
\berr
\delta(\xi,k)=z^{\ii\nu}\delta_0(\xi,t)\delta_1(\xi,k),
\eerr
with $\delta_0(\xi,t), \delta_1(\xi,k)$ defined by
\be\label{4.37}
\delta_0(\xi,t)=(8t)^{-\frac{\ii\nu}{2}}\e^{\chi(k_0)},\quad \delta_1(\xi,k)=\e^{\chi(k)-\chi(k_0)}.
\ee
Now we let
\be\label{4.38}
\tilde{M}(x,t;z)=M^{(3)}(x,t;k)\e^{-\frac{t\Phi(\xi,k_0)}{2}\sigma_3}\delta_0^{\sigma_3}(\xi,t),\quad k\in\bfC\setminus\Sigma^{(3)}.
\ee
Then $\tilde{M}$ is a sectionally analytic function of $z$ which satisfies
$$\tilde{M}_+(x,t;z)=\tilde{M}_-(x,t;z)\tilde{J}(x,t,z),\quad z\in X,$$
where the contour $X=X_1\cup X_2\cup X_3\cup X_4$ is the cross defined by
\be\label{4.39}
\begin{aligned}
X_1&=\{s\e^{\frac{\ii\pi}{4}}|0\leq s<\infty\},~~~ X_2=\{s\e^{\frac{3\ii\pi}{4}}|0\leq s<\infty\},\\
X_3&=\{s\e^{-\frac{3\ii\pi}{4}}|0\leq s<\infty\},~ X_4=\{s\e^{-\frac{\ii\pi}{4}}|0\leq s<\infty\},
\end{aligned}
\ee
and oriented as in Fig. \ref{fig6}.
\begin{figure}[htbp]
  \centering
  \includegraphics[width=3in]{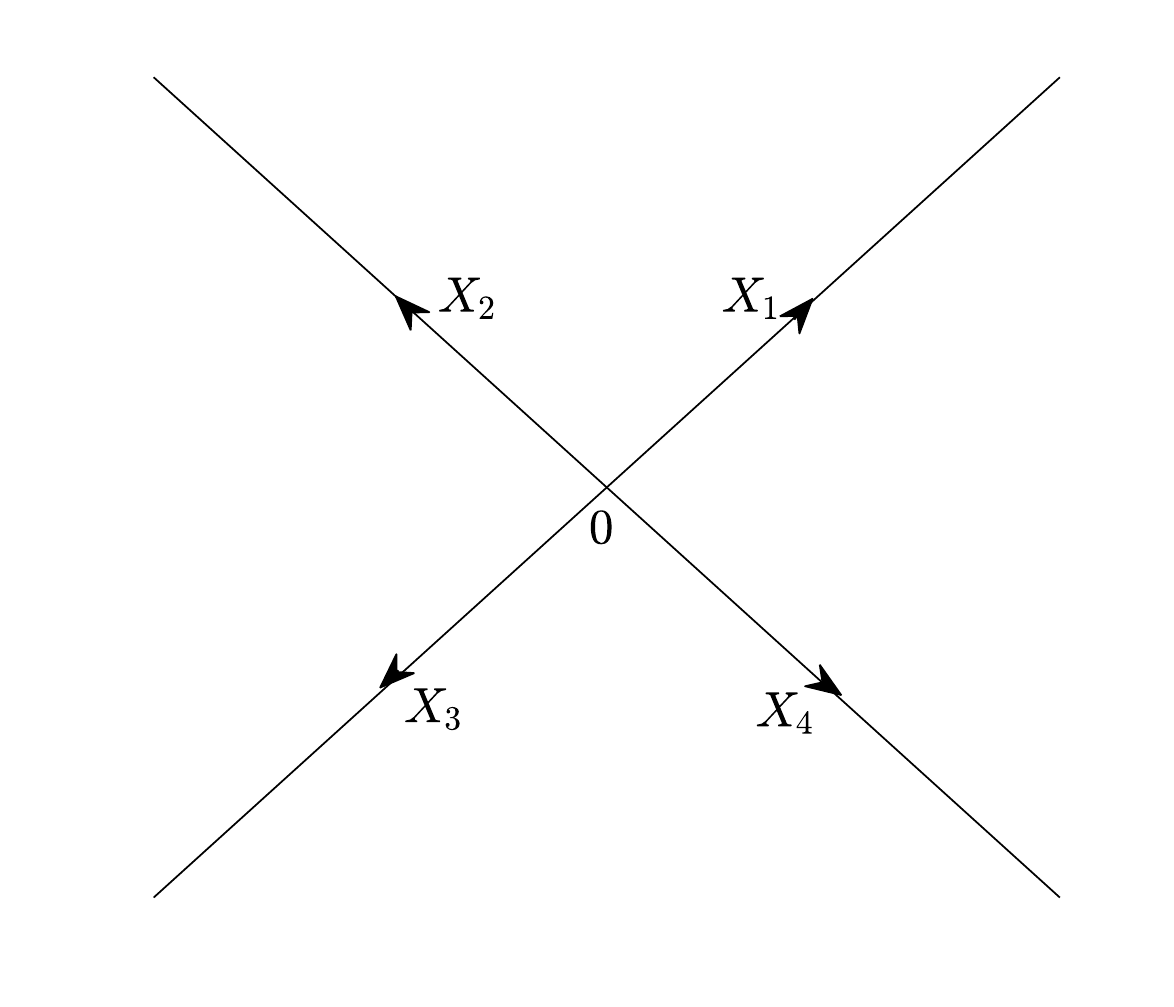}
  \caption{The contour $X=X_1\cup X_2\cup X_3\cup X_4$.}\label{fig6}
\end{figure}

The jump matrix
\be\label{4.40}
\tilde{J}(x,t,z)=\e^{\frac{t\Phi(\xi,k_0)}{2}\hat{\sigma}_3}\delta_0^{-\hat{\sigma}_3}(\xi,t)J^{(3)}(x,t,k)
\ee
is given by
\berr
\tilde{J}(x,t,z)=\left\{
\begin{aligned}
&\begin{pmatrix}
1 ~& 0\\[4pt]
-(r_{1,a}+\Gamma_a)\delta_1^{-2}\e^{\frac{\ii z^2}{2}}z^{-2\ii\nu} ~& 1
\end{pmatrix},\quad k\in \mathcal{X}_1\cap D_1,\\
&\begin{pmatrix}
1 ~& 0\\[4pt]
-(r_{1,a}+\Gamma)\delta_1^{-2}\e^{\frac{\ii z^2}{2}}z^{-2\ii\nu} ~& 1
\end{pmatrix},\quad k\in \mathcal{X}_1\cap D_2,\\
&\begin{pmatrix}
1 ~& -r_{2,a}\delta_1^{2}\e^{-\frac{\ii z^2}{2}}z^{2\ii\nu}\\[4pt]
0 ~& 1
\end{pmatrix},~\qquad\qquad k\in \mathcal{X}_2,\\
&\begin{pmatrix}
1 ~& 0\\[4pt]
\bar{r}_{2,a}\delta_1^{-2}\e^{\frac{\ii z^2}{2}}z^{-2\ii\nu} ~& 1
\end{pmatrix},~\qquad\qquad k\in \mathcal{X}_3,\\
&\begin{pmatrix}
1 ~& (\bar{r}_{1,a}+\bar{\Gamma})\delta_1^{2}\e^{-\frac{\ii z^2}{2}}z^{2\ii\nu}\\[4pt]
0 ~& 1
\end{pmatrix},~\quad\quad k\in \mathcal{X}_4\cap D_3,\\
&\begin{pmatrix}
1 ~& (\bar{r}_{1,a}+\bar{\Gamma}_a)\delta_1^{2}\e^{-\frac{\ii z^2}{2}}z^{2\ii\nu}\\[4pt]
0 ~& 1
\end{pmatrix},\quad\quad k\in \mathcal{X}_4\cap D_4,\\
\end{aligned}
\right.
\eerr
where $\mathcal{X}=X+k_0$ denote the cross $X$ centered at $k_0$ and we have used the following relation
$$t(\Phi(\xi,k)-\Phi(\xi,k_0))=4\ii t(k-k_0)^2=\frac{\ii z^2}{2}.$$

\subsection{Model RH problem}
Let $X=X_1\cup X_2\cup X_3\cup X_4\subset\bfC$ be the cross defined in \eqref{4.39} and oriented as in Fig. \ref{fig6}. Let $\mathcal{D}\subset\bfC$ denote the open unit disk, and also define the function $\nu:\mathcal{D}\rightarrow(0,\infty)$ by $\nu(q)=-\frac{1}{2\pi}\ln(1-|q|^2)$. We consider the following RH problems parametrized by $q\in\mathcal{D}$:
\be\label{4.41}\left\{
\begin{aligned}
&M^X_+(q,z)=M^X_-(q,z)J^X(q,z),~\text{for~almost~every}~z\in X,\\
&M^X(q,z)\rightarrow I,~~~~\qquad\qquad\qquad\text{as}~z\rightarrow\infty,
\end{aligned}
\right.
\ee
where the jump matrix $J^X(q,z)$ is defined by
\berr
J^X(q,z)=\left\{
\begin{aligned}
&\begin{pmatrix}
1 ~& 0\\[4pt]
-q\e^{\frac{\ii z^2}{2}}z^{-2\ii\nu} ~& 1
\end{pmatrix},~\quad\quad z\in X_1,\\
&\begin{pmatrix}
1 ~& -\frac{\bar{q}}{1-|q|^2}\e^{-\frac{\ii z^2}{2}}z^{2\ii\nu}\\[4pt]
0 ~& 1
\end{pmatrix},~~ z\in X_2,\\
&\begin{pmatrix}
1 ~& 0\\[4pt]
\frac{q}{1-|q|^2}\e^{\frac{\ii z^2}{2}}z^{-2\ii\nu} ~& 1
\end{pmatrix},~\quad z\in X_3,\\
&\begin{pmatrix}
1 ~& \bar{q}\e^{-\frac{\ii z^2}{2}}z^{2\ii\nu}\\[4pt]
0 ~& 1
\end{pmatrix},~\quad\qquad z\in X_4.
\end{aligned}
\right.
\eerr
Then the RH problem \eqref{4.41} can be solved explicitly in terms of parabolic cylinder functions \cite{PD,PD1}.
\begin{theorem}\label{th4.1}
The RH problem \eqref{4.41} has a unique solution $M^X(q,z)$ for each $q\in \mathcal{D}$. This solution satisfies
\be\label{4.42}
M^X(q,z)=I+\frac{\ii}{z}\begin{pmatrix}
0 ~& -\beta^X(q)\\[4pt]
\overline{\beta^X(q)} ~& 0
\end{pmatrix}+O\bigg(\frac{1}{z^2}\bigg),\quad z\rightarrow\infty,~q\in\mathcal{D},
\ee
where the error term is uniform with respect to $\arg z\in[0,2\pi]$ and the function $\beta^X(q)$ is given by
\be\label{4.43}
\beta^X(q)=\sqrt{\nu(q)}\e^{\ii\big(-\frac{3\pi}{4}-\arg q+\arg\Gamma(\ii\nu(q))\big)},\quad q\in\mathcal{D},
\ee
where $\Gamma(\cdot)$ denotes the standard Gamma function. Moreover, for each compact subset $\mathcal{D}'$ of $\mathcal{D}$,
\be\label{4.44}
\sup_{q\in\mathcal{D}'}\sup_{z\in\bfC\setminus X}|M^X(q,z)|<\infty.
\ee
\end{theorem}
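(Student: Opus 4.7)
The plan is to solve the model RH problem \eqref{4.41} explicitly by reducing it to a first order ODE with polynomial coefficients in $z$ whose solutions are parabolic cylinder functions, and then extracting $\beta^X(q)$ from the large $z$ asymptotics of those functions. First, I would introduce $\Psi(q,z)=M^X(q,z)\,z^{-\ii\nu\sigma_3}\e^{\ii z^2\sigma_3/4}$, with the principal branch of $z^{\ii\nu}$ chosen so that the cut lies along $X_2\cup X_3$. A direct computation using the definition of $J^X$ shows that on each ray $X_j$ the conjugation by $z^{-\ii\nu\sigma_3}\e^{\ii z^2\sigma_3/4}$ absorbs the exponential and power factors in $J^X$, so the jumps of $\Psi$ are constant matrices depending only on $q$. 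Uniqueness of $M^X$ would follow from the standard Zhou vanishing lemma: the jump matrices on $X_1\cup X_3$ are lower triangular with unit diagonal, those on $X_2\cup X_4$ upper triangular with unit diagonal, and their compositions along each axis are positive definite Hermitian, so the usual Schwarz reflection argument applied to $\Psi\Psi^*$ forces any solution of the homogeneous problem to vanish.

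Next, because $\Psi$ has only constant jumps, the matrix $\Psi'(q,z)\Psi^{-1}(q,z)$ is entire in $z$. Rewriting the normalization $M^X=I+O(1/z)$ in terms of $\Psi$ shows that $\Psi'\Psi^{-1}$ grows at most linearly, and hence equals $-\tfrac{\ii z}{2}\sigma_3+B$ for some constant off-diagonal matrix $B$ whose $(1,2)$ and $(2,1)$ entries are read off from the $1/z$ coefficient in \eqref{4.42} and are, up to scalar factors, $-\ii\beta^X$ and $\ii\overline{\beta^X}$. Differentiating the ODE $\Psi'=(-\tfrac{\ii z}{2}\sigma_3+B)\Psi$ once decouples it into two scalar Weber equations, so each column of $\Psi$ is expressible as a linear combination of parabolic cylinder functions $D_{\ii\nu}$ and $D_{-\ii\nu}$ evaluated at arguments proportional to $\e^{\pm 3\ii\pi/4}z$.

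The core step is to select the particular linear combinations of parabolic cylinder functions that realize the required asymptotics $M^X\to I$ in each of the four Stokes sectors of $\bfC\setminus X$ and at the same time reproduce the prescribed constant jumps. Using the asymptotic expansion $D_a(\zeta)=\zeta^a\e^{-\zeta^2/4}(1+O(\zeta^{-2}))$ in the sector $|\arg\zeta|<3\pi/4$, together with the connection formula relating $D_a(\zeta)$, $D_a(-\zeta)$ and $D_{-a-1}(\pm\ii\zeta)$, one matches $\Psi$ across neighbouring sectors and thereby pins down $B$. The modulus $|\beta^X(q)|^2=\nu(q)$ drops out of a determinant identity obtained by comparing the leading coefficients in adjacent sectors (equivalent to $\Gamma(\ii\nu)\Gamma(-\ii\nu)=\pi/(\nu\sinh\pi\nu)$), while the phase $\arg\beta^X=-\tfrac{3\pi}{4}-\arg q+\arg\Gamma(\ii\nu)$ comes from tracking the factors $\e^{\pm\ii\pi\nu/2}$ and $\Gamma(\pm\ii\nu)$ produced by the connection formulas. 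The main technical obstacle is precisely this bookkeeping of phases across the four sectors, since $z^{\ii\nu}$ is multivalued and the branch choices on each ray $X_j$ interact delicately with the Gamma-function factors produced by the parabolic cylinder connection formulas; this is what determines the specific combination $-\tfrac{3\pi}{4}-\arg q+\arg\Gamma(\ii\nu)$ in \eqref{4.43}.

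Finally, the uniform estimate \eqref{4.44} would follow from the explicit representation of $M^X$ in terms of parabolic cylinder functions: these are entire in $z$ and depend continuously on $\nu\in(0,\infty)$, hence on $q\in\mathcal{D}$, and the products that reconstruct $M^X$ in each sector are bounded on any compact subset $\mathcal{D}'\subset\mathcal{D}$ uniformly in $z\in\bfC\setminus X$ because the exponential growth of $D_{\pm\ii\nu}$ in one half-plane is cancelled exactly by the $\e^{-\ii z^2/4}$ factor coming from the definition of $\Psi$.
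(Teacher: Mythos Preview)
Your proposal is correct and is precisely the standard Deift--Zhou parabolic cylinder reduction that the paper has in mind: the paper's own proof consists only of the sentence ``The proof of this theorem can be analogously derived by the procedure used in \cite{PD,PD1,JL3},'' and what you have outlined is exactly that procedure. In particular, the change of variables $\Psi=M^X z^{-\ii\nu\sigma_3}\e^{\ii z^2\sigma_3/4}$, the Liouville argument giving a first-order system with polynomial coefficient, the reduction to Weber's equation, and the extraction of $\beta^X$ via the connection formulas for $D_a$ are all the steps carried out in those references, so your proposal coincides with the paper's (implicit) proof.
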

\begin{proof}
The proof of this theorem can be analogously derived by the procedure used in \cite{PD,PD1,JL3}.
\end{proof}

Define $q=r(k_0)$, then $r(k)\rightarrow q$ and $\delta_1\rightarrow1$ as $t\rightarrow\infty$. This implies that the jump matrix $\tilde{J}$ tend to the matrix $J^X$ for large $t$. In other words, the jumps of $M^{(3)}$ for $k$ near $k_0$ approach those of the function $M^X\delta^{-\sigma_3}_0\e^{\frac{t\Phi(\xi,k_0)}{2}\sigma_3}$ as $t\rightarrow\infty$. Therefore, we can approximate $M^{(3)}$ in the neighborhood $D_\varepsilon(k_0)$ of $k_0$ by
\be\label{4.45}
M^{(k_0)}(x,t;k)=\e^{-\frac{t\Phi(\xi,k_0)}{2}\sigma_3}\delta^{\sigma_3}_0M^X(q,z)\delta^{-\sigma_3}_0\e^{\frac{t\Phi(\xi,k_0)}{2}\sigma_3}.
\ee

Let $\mathcal{X}^\varepsilon$ denote the part of $\mathcal{X}$  that lies in the disk $D_\varepsilon(k_0)$, that is, $\mathcal{X}^\varepsilon=\mathcal{X}\cap D_\varepsilon(k_0)$. Then we have the following lemma about the function $M^{(k_0)}$, which will be very useful in deriving the error bound in next section.
\begin{lemma}\label{lem3}
For each $t>0$ and $\xi\in\mathcal{I}$, the function $M^{(k_0)}(x,t;k)$ defined in \eqref{4.45} is an analytic function of $k\in D_\varepsilon(k_0)\setminus\mathcal{X}^\varepsilon$. Furthermore,
\be\label{4.46}
|M^{(k_0)}(x,t;k)|\leq C,\quad t>3,~\xi\in\mathcal{I},~k\in\overline{ D_\varepsilon(k_0)}\setminus\mathcal{X}^\varepsilon.
\ee
On the other hand, across $\mathcal{X}^\varepsilon$, $M^{(k_0)}$ satisfied the jump condition $M_+^{(k_0)}=M_-^{(k_0)}J^{(k_0)}$ with jump matrix $$J^{(k_0)}=\e^{-\frac{t\Phi(\xi,k_0)}{2}\hat{\sigma}_3}\delta^{\hat{\sigma}_3}_0J^X,$$ and $J^{(k_0)}$ satisfies the following estimates:
\be\label{4.47}
\left\{
\begin{aligned}
&\|J^{(3)}-J^{(k_0)}\|_{L^1(\mathcal{X}^\varepsilon)}\leq Ct^{-1}\ln t,\\
&\|J^{(3)}-J^{(k_0)}\|_{L^2(\mathcal{X}^\varepsilon)}\leq Ct^{-3/4}\ln t,\\
&\|J^{(3)}-J^{(k_0)}\|_{L^\infty(\mathcal{X}^\varepsilon)}\leq Ct^{-1/2}\ln t,
\end{aligned}
\quad t>3,~\xi\in\mathcal{I},
\right.
\ee
where $C>0$ is a constant independent of $t,\xi,k$. Moreover, as $t\rightarrow\infty$,
\be\label{4.48}
\|(M^{(k_0)})^{-1}(x,t;k)-I\|_{L^\infty(\partial D_\varepsilon(k_0))}=O(t^{-1/2}),
\ee
and
\be\label{4.49}
\frac{1}{2\pi\ii}\int_{\partial D_\varepsilon(k_0)}((M^{(k_0)})^{-1}(x,t;k)-I)\dd k=-\frac{\e^{-\frac{t\Phi(\xi,k_0)}{2}\hat{\sigma}_3}\delta^{\hat{\sigma}_3}_0M^X_1(\xi)}
{\sqrt{8t}}+O(t^{-1}),
\ee
where $M^X_1(\xi)$ is defined by
\be\label{4.50}
M^X_1(\xi)=\ii\begin{pmatrix}
0 ~& -\beta^X(q)\\[4pt]
\overline{\beta^X(q)} ~& 0
\end{pmatrix}.
\ee
\end{lemma}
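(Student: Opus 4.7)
The plan is to derive every claim from the definition $M^{(k_0)}=\e^{-t\Phi(\xi,k_0)\sigma_3/2}\delta_0^{\sigma_3}M^X(q,z)\delta_0^{-\sigma_3}\e^{t\Phi(\xi,k_0)\sigma_3/2}$ of \eqref{4.45}, used in tandem with Theorem \ref{th4.1} and the decompositions of Lemmas \ref{lem1} and \ref{lem2}. Two elementary observations unlock most of the argument: since $k_0=-\xi/4\in\bfR$, we have $\Phi(\xi,k_0)=-\ii\xi^2/4\in\ii\bfR$, and the integrand defining $\chi(k_0)$ in \eqref{4.36} is purely imaginary after dividing by $\ii$, so $\chi(k_0)\in\ii\bfR$; hence both $\delta_0$ and $\e^{-t\Phi(\xi,k_0)/2}$ are unimodular and conjugation by $\delta_0^{\sigma_3}\e^{-t\Phi(\xi,k_0)\sigma_3/2}$ preserves matrix norms uniformly in $\xi,t$. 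Analyticity on $D_\varepsilon(k_0)\setminus\mathcal{X}^\varepsilon$ is then immediate from the fact that $z=\sqrt{8t}(k-k_0)$ is a biholomorphism of $D_\varepsilon(k_0)$ onto a disk about $0$ and $M^X(q,\cdot)$ is analytic off $X$; the uniform bound \eqref{4.46} follows from \eqref{4.44} since $q=r(k_0)$ lies in a compact subset of $\mathcal{D}$ thanks to $\sup_{k\in\bfR_-}|r(k)|<1$. The jump identity $M^{(k_0)}_+=M^{(k_0)}_-J^{(k_0)}$ with $J^{(k_0)}=\e^{-t\Phi(\xi,k_0)\hat\sigma_3/2}\delta_0^{\hat\sigma_3}J^X$ is a direct conjugation of $M^X_+=M^X_-J^X$.

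For the estimates \eqref{4.47}, the same conjugation identity yields $J^{(3)}-J^{(k_0)}=\delta_0^{\hat\sigma_3}\e^{-t\Phi(\xi,k_0)\hat\sigma_3/2}(\tilde J-J^X)$, and unimodularity of the conjugating factor reduces matters to a componentwise comparison of $\tilde J$ from \eqref{4.40} with $J^X$. On each of the six sub-contours of $\mathcal{X}^\varepsilon$ the off-diagonal entry of the difference has the form $[h(x,t,k)\delta_1(\xi,k)^{\mp2}-q_*]\e^{\pm\ii z^2/2}z^{\mp2\ii\nu}$, where $h$ is one of $r_{1,a}, r_{2,a}, r_{1,a}+\Gamma_a, r_{1,a}+\Gamma$ (or its conjugate) and $q_*\in\{r(k_0),\overline{r(k_0)}\}$. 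I would split the bracket as $[h-h(k_0)]\delta_1^{\mp2}+h(k_0)[\delta_1^{\mp2}-1]$, then invoke Lemma \ref{lem2}(2) and the refinement for $\Gamma_a$ coming from Lemma \ref{lem1} to get $|h-h(k_0)|\le C|k-k_0|\e^{\frac{t}{4}|\mathrm{Re}\,\Phi|}$, and use a Lipschitz estimate for $\chi$ from \eqref{4.36} to get $|\delta_1(\xi,k)-1|\le C|k-k_0|(1+|\ln|k-k_0||)$. Substituting $k-k_0=z/\sqrt{8t}$ and using that $|\e^{t\Phi-t\Phi(\xi,k_0)}|=|\e^{\ii z^2/2}|=\e^{-c|z|^2}$ on each ray of $\mathcal{X}^\varepsilon$, the $L^\infty$ bound comes out $O(t^{-1/2}\ln t)$, and integrating against $|dk|=|dz|/\sqrt{8t}$ yields the stated $L^1$ and $L^2$ bounds.

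Finally, \eqref{4.48} follows by applying the expansion $(M^X)^{-1}=I-M_1^X/z+O(z^{-2})$ from Theorem \ref{th4.1} at points of $\partial D_\varepsilon(k_0)$, where $|z|=\sqrt{8t}\,\varepsilon$; conjugation by the unimodular factor gives $\|(M^{(k_0)})^{-1}-I\|_{L^\infty(\partial D_\varepsilon(k_0))}=O(t^{-1/2})$. For \eqref{4.49}, inserting the same expansion under the integral produces
\berr
\frac{1}{2\pi\ii}\int_{\partial D_\varepsilon(k_0)}\!\big((M^{(k_0)})^{-1}-I\big)\dd k &=& -\frac{\e^{-t\Phi(\xi,k_0)\hat\sigma_3/2}\delta_0^{\hat\sigma_3}M_1^X(\xi)}{\sqrt{8t}}\cdot\frac{1}{2\pi\ii}\int_{\partial D_\varepsilon(k_0)}\!\frac{\dd k}{k-k_0}+O(t^{-1}),
\eerr
and with the orientation of $\partial D_\varepsilon(k_0)$ inherited from the small-norm framework (the disk acting as an interior hole in the global domain) the residue integral contributes the sign giving \eqref{4.49}, while the $O(z^{-2})=O(t^{-1}\varepsilon^{-2})$ tail integrates against the bounded length of $\partial D_\varepsilon(k_0)$ to $O(t^{-1})$. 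The principal technical hurdle is the bookkeeping across the six sub-contours — each carries its own analytic continuation (of $r_1, r_2$, or $\Gamma$) — and verifying that the combinations such as $(r_{1,a}+\Gamma_a)\delta_1^{-2}$ really reduce to $r(k_0)$ with remainder $O(|k-k_0|\ln|k-k_0|)$; the $\ln t$ factor in \eqref{4.47} is exactly the image of this logarithm under $z=\sqrt{8t}(k-k_0)$, and matching the $t$-powers in all three $L^p$ norms is the main accounting exercise.
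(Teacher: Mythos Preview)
Your proposal is correct and follows essentially the same path as the paper's proof: unimodularity of $\e^{-t\Phi(\xi,k_0)/2}$ and $\delta_0$, the Lipschitz-with-logarithm bound $|\delta_1-1|\le C|k-k_0|(1+|\ln|k-k_0||)$ from \eqref{4.36}, the splitting of the bracket and invocation of Lemmas~\ref{lem1}--\ref{lem2}, the pointwise bound $|J^{(3)}-J^{(k_0)}|\le C|k-k_0|(1+|\ln|k-k_0||)\e^{-3t|k-k_0|^2}$, and the large-$z$ expansion plus Cauchy's formula for \eqref{4.48}--\eqref{4.49}. One small slip: on $\mathcal{X}_2^\varepsilon$ and $\mathcal{X}_3^\varepsilon$ the target value $q_*$ is not $\overline{r(k_0)}$ but $r_2(k_0)=\overline{r(k_0)}/(1-|r(k_0)|^2)$ (respectively its conjugate); since Lemma~\ref{lem2}(2) already gives $|r_{2,a}-r_2(k_0)|\le C|k-k_0|\e^{t|\mathrm{Re}\,\Phi|/4}$ this does not affect the argument.
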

\begin{proof}
The analyticity of $M^{(k_0)}$ is obvious. Since $|\e^{-\frac{t\Phi(\xi,k_0)}{2}}|=|\delta_0(\xi,t)|=1$, thus, the estimate \eqref{4.46} follows from the definition of $M^{(k_0)}$ in \eqref{4.45} and the estimate \eqref{4.44}.

A careful computation as in \cite{JL1,JL3} shows that
\be\label{4.51}
|\chi(k)-\chi(k_0)|\leq C|k-k_0|(1+\big|\ln|k-k_0|\big|),\quad \xi\in\mathcal{I},~k\in\mathcal{X}^\varepsilon.
\ee
Therefore, we have
\be\label{4.52}
|\delta_1(\xi,k)-1|=|\e^{\chi(k)-\chi(k_0)}-1|\leq C|k-k_0|(1+\big|\ln|k-k_0|\big|),\quad \xi\in\mathcal{I},~k\in\mathcal{X}^\varepsilon.
\ee
On the other hand, for $k\in\mathcal{X}_1^\varepsilon\cap D_1$, by the estimates \eqref{4.12} and \eqref{4.30}, we can find
\bea
|r_{1,a}(x,t,k)+\Gamma_a(t,k)-q|&=&|r_{1,a}(x,t,k)+\Gamma_a(t,k)-r_1(k_0)-\Gamma(k_0)|\nn\\
&\leq&|r_{1,a}(x,t,k)-r_1(k_0)|+|\Gamma_a(t,k)-\Gamma(0)|+|\Gamma(k_0)-\Gamma(0)|\nn\\
&\leq&C|k-k_0|\e^{\frac{t}{4}|\text{Re}\Phi(\xi,k)|}.\nn
\eea
Then, we obtain
\bea
|(r_{1,a}+\Gamma_a)\delta_1^{-2}-q|&\leq&|\delta_1^{-2}-1||r_{1,a}+\Gamma_a|+|r_{1,a}+\Gamma_a-q|\nn\\
&\leq&C|k-k_0|(1+\big|\ln|k-k_0|\big|)\e^{\frac{t}{4}|\text{Re}\Phi(\xi,k)|},\label{4.53}
\eea
for $k\in\mathcal{X}_1^\varepsilon\cap D_1$.

 Accordingly, we have
\be\label{4.54}
\begin{aligned}
|(r_{1,a}+\Gamma)\delta_1^{-2}-q|&\leq C|k-k_0|(1+\big|\ln|k-k_0|\big|)\e^{\frac{t}{4}|\text{Re}\Phi(\xi,k)|},~~k\in\mathcal{X}_1^\varepsilon\cap D_2,\\
\bigg|r_{2,a}\delta_1^{2}-\frac{\bar{q}}{1-|q|^2}\bigg|&\leq C|k-k_0|(1+\big|\ln|k-k_0|\big|)\e^{\frac{t}{4}|\text{Re}\Phi(\xi,k)|},~~k\in\mathcal{X}_2^\varepsilon,\\
\bigg|\bar{r}_{2,a}\delta_1^{-2}-\frac{q}{1-|q|^2}\bigg|&\leq C|k-k_0|(1+\big|\ln|k-k_0|\big|)\e^{\frac{t}{4}|\text{Re}\Phi(\xi,k)|},~~k\in\mathcal{X}_3^\varepsilon,\\
|(\bar{r}_{1,a}+\bar{\Gamma})\delta_1^{2}-\bar{q}|&\leq C|k-k_0|(1+\big|\ln|k-k_0|\big|)\e^{\frac{t}{4}|\text{Re}\Phi(\xi,k)|},~~k\in\mathcal{X}_4^\varepsilon\cap D_3,\\
|(\bar{r}_{1,a}+\bar{\Gamma}_a)\delta_1^{2}-\bar{q}|&\leq C|k-k_0|(1+\big|\ln|k-k_0|\big|)\e^{\frac{t}{4}|\text{Re}\Phi(\xi,k)|},~~k\in\mathcal{X}_4^\varepsilon\cap D_4.
\end{aligned}
\ee
Since
\berr
\tilde{J}-J^X=\left\{
\begin{aligned}
&\begin{pmatrix}
1 ~& 0\\[4pt]
-((r_{1,a}+\Gamma_a)\delta_1^{-2}-q)\e^{\frac{\ii z^2}{2}}z^{-2\ii\nu} ~& 1
\end{pmatrix},\quad k\in\mathcal{X}_1\cap D_1,\\
&\begin{pmatrix}
1 ~& 0\\[4pt]
-((r_{1,a}+\Gamma)\delta_1^{-2}-q)\e^{\frac{\ii z^2}{2}}z^{-2\ii\nu} ~& 1
\end{pmatrix},~~\quad k\in\mathcal{X}_1\cap D_2,\\
&\begin{pmatrix}
1 ~& -(r_{2,a}\delta_1^{2}-\frac{\bar{q}}{1-|q|^2})\e^{-\frac{\ii z^2}{2}}z^{2\ii\nu}\\[4pt]
0 ~& 1
\end{pmatrix},~~~\quad\quad k\in\mathcal{X}_2,\\
&\begin{pmatrix}
1 ~& 0\\[4pt]
(\bar{r}_{2,a}\delta_1^{-2}-\frac{q}{1-|q|^2})\e^{\frac{\ii z^2}{2}}z^{-2\ii\nu} ~& 1
\end{pmatrix},~\quad\quad~~~ k\in \mathcal{X}_3,\\
&\begin{pmatrix}
1 ~& ((\bar{r}_{1,a}+\bar{\Gamma})\delta_1^{2}-\bar{q})\e^{-\frac{\ii z^2}{2}}z^{2\ii\nu}\\[4pt]
0 ~& 1
\end{pmatrix},~\quad\quad ~~k\in \mathcal{X}_4\cap D_3,\\
&\begin{pmatrix}
1 ~& ((\bar{r}_{1,a}+\bar{\Gamma}_a)\delta_1^{2}-\bar{q})\e^{-\frac{\ii z^2}{2}}z^{2\ii\nu}\\[4pt]
0 ~& 1
\end{pmatrix},\quad\quad ~k\in \mathcal{X}_4\cap D_4.
\end{aligned}
\right.
\eerr
Then, \eqref{4.53}, \eqref{4.54} together with the facts
\bea
\text{Re}\bigg(\frac{\ii z^2}{2}\bigg)&=&t\text{Re}\Phi(\xi,k)=4t\text{Re}[\ii(k-k_0)^2]=-4t|k-k_0|^2,~~\quad \text{for~}k\in \mathcal{X}_1\cup \mathcal{X}_3,\nn\\
\text{Re}\bigg(-\frac{\ii z^2}{2}\bigg)&=&-t\text{Re}\Phi(\xi,k)=-4t\text{Re}[\ii(k-k_0)^2]=-4t|k-k_0|^2,\quad \text{for~}k\in \mathcal{X}_2\cup \mathcal{X}_4\nn
\eea
and $|z^{\pm2\ii\nu}|=\e^{\mp2\nu\arg z}$ is bounded by $0<\nu\leq\nu_{\max}=-(2\pi)^{-1}\ln(1-\sup_k|r(k)|^2)<\infty$ yield
\be\label{4.55}
|\tilde{J}-J^X|\leq C|k-k_0|(1+\big|\ln|k-k_0|\big|)\e^{-3t|k-k_0|^2},\quad k\in\mathcal{X}^\varepsilon.
\ee
However,
\berr
J^{(3)}-J^{(k_0)}=\e^{-\frac{t\Phi(\xi,k_0)}{2}\hat{\sigma}_3}\delta^{\hat{\sigma}_3}_0(\tilde{J}-J^X),\quad k\in\mathcal{X}^\varepsilon,
\eerr
hence, we immediately get
\be\label{4.56}
|J^{(3)}-J^{(k_0)}|\leq C|k-k_0|(1+\big|\ln|k-k_0|\big|)\e^{-3t|k-k_0|^2},\quad k\in\mathcal{X}^\varepsilon.
\ee
Thus, we conclude that
\berr
\|J^{(3)}-J^{(k_0)}\|_{L^\infty(\mathcal{X}^\varepsilon)}\leq C\sup_{0\leq s\leq\varepsilon}s(1+|\ln s|)\e^{-3ts^2}\leq Ct^{-1/2}\ln t,\quad t>3,~\xi\in\mathcal{I},
\eerr
\berr
\|J^{(3)}-J^{(k_0)}\|_{L^1(\mathcal{X}^\varepsilon)}\leq C\int_0^\varepsilon s(1+|\ln s|)\e^{-3ts^2}\dd s\leq Ct^{-1}\ln t,\quad t>3,~\xi\in\mathcal{I},
\eerr
and
\berr
\|J^{(3)}-J^{(k_0)}\|_{L^2(\mathcal{X}^\varepsilon)}\leq C\bigg(\int_0^\varepsilon \big(s(1+|\ln s|)\e^{-3ts^2}\big)^2\dd s\bigg)^{\frac{1}{2}}\leq Ct^{-3/4}\ln t,\quad t>3,~\xi\in\mathcal{I}.
\eerr

If $k\in\partial D_\varepsilon(k_0)$, the variable $z=\sqrt{8t}(k-k_0)$ tends to infinity as $t\rightarrow\infty$. It follows from \eqref{4.42} that
\berr
M^X(q,z)=I+\frac{M^X_1(\xi)}{\sqrt{8t}(k-k_0)}+O\bigg(\frac{1}{z^2}\bigg),\quad t\rightarrow\infty,~k\in \partial D_\varepsilon(k_0),
\eerr
where $M^X_1(\xi)$ is defined by \eqref{4.50}. Since the estimate \eqref{4.44} and $$M^{(k_0)}(x,t;k)=\e^{-\frac{t\Phi(\xi,k_0)}{2}\hat{\sigma}_3}\delta^{\hat{\sigma}_3}_0M^X(q,z),
~~|\e^{-\frac{t\Phi(\xi,k_0)}{2}}\delta_0|\leq C,$$
thus we have
\be\label{4.57}
(M^{(k_0)})^{-1}(x,t;k)-I=-\frac{\e^{-\frac{t\Phi(\xi,k_0)}{2}\hat{\sigma}_3}\delta^{\hat{\sigma}_3}_0M^X_1(\xi)}
{\sqrt{8t}(k-k_0)}+O\bigg(\frac{1}{t}\bigg),\quad t\rightarrow\infty,~k\in \partial D_\varepsilon(k_0).
\ee
The estimate \eqref{4.48} immediately follows from \eqref{4.57} and $|M_1^X|\leq C$. By Cauchy's formula and \eqref{4.57}, we derive \eqref{4.49}.
\end{proof}

\subsection{Derivation of the asymptotics formula and error bound}
We now begin to establish the explicit long-time asymptotics formula for the KE equation \eqref{1.1} on the half-line.

Define the approximate solution $M^{(app)}(x,t;k)$ by
\be\label{4.58}
M^{(app)}=\left\{\begin{aligned}
&M^{(k_0)},\quad k\in D_\varepsilon(k_0),\\
&I,\qquad ~~~{\text elsewhere}.
\end{aligned}
\right.
\ee
Let $\hat{M}(x,t;k)$ be
\be\label{4.58'}
\hat{M}=M^{(3)}(M^{(app)})^{-1},
\ee
 then $\hat{M}(x,t;k)$ satisfies the following RH problem
\be\label{4.59}
\hat{M}_+(x,t;k)=\hat{M}_-(x,t;k)\hat{J}(x,t,k),\quad k\in\hat{\Sigma},
\ee
where the jump contour $\hat{\Sigma}=\Sigma^{(3)}\cup\partial D_\varepsilon(k_0)$ is depicted in Fig. \ref{fig7}, and the jump matrix $\hat{J}(x,t,k)$ is given by
\be\label{4.60}
\hat{J}=\left\{
\begin{aligned}
&M^{(k_0)}_-J^{(3)}(M^{(k_0)}_+)^{-1},\quad k\in\hat{\Sigma}\cap D_\varepsilon(k_0),\\
&(M^{(k_0)})^{-1},\qquad\qquad\quad k\in\partial D_\varepsilon(k_0),\\
&J^{(3)},\qquad\qquad\qquad\quad~~ k\in\hat{\Sigma}\setminus \overline{D_\varepsilon(k_0)}.
\end{aligned}
\right.
\ee
Let $\hat{W}=\hat{J}-I.$ Then the following inequalities are valid.
\begin{figure}[htbp]
  \centering
  \includegraphics[width=4in]{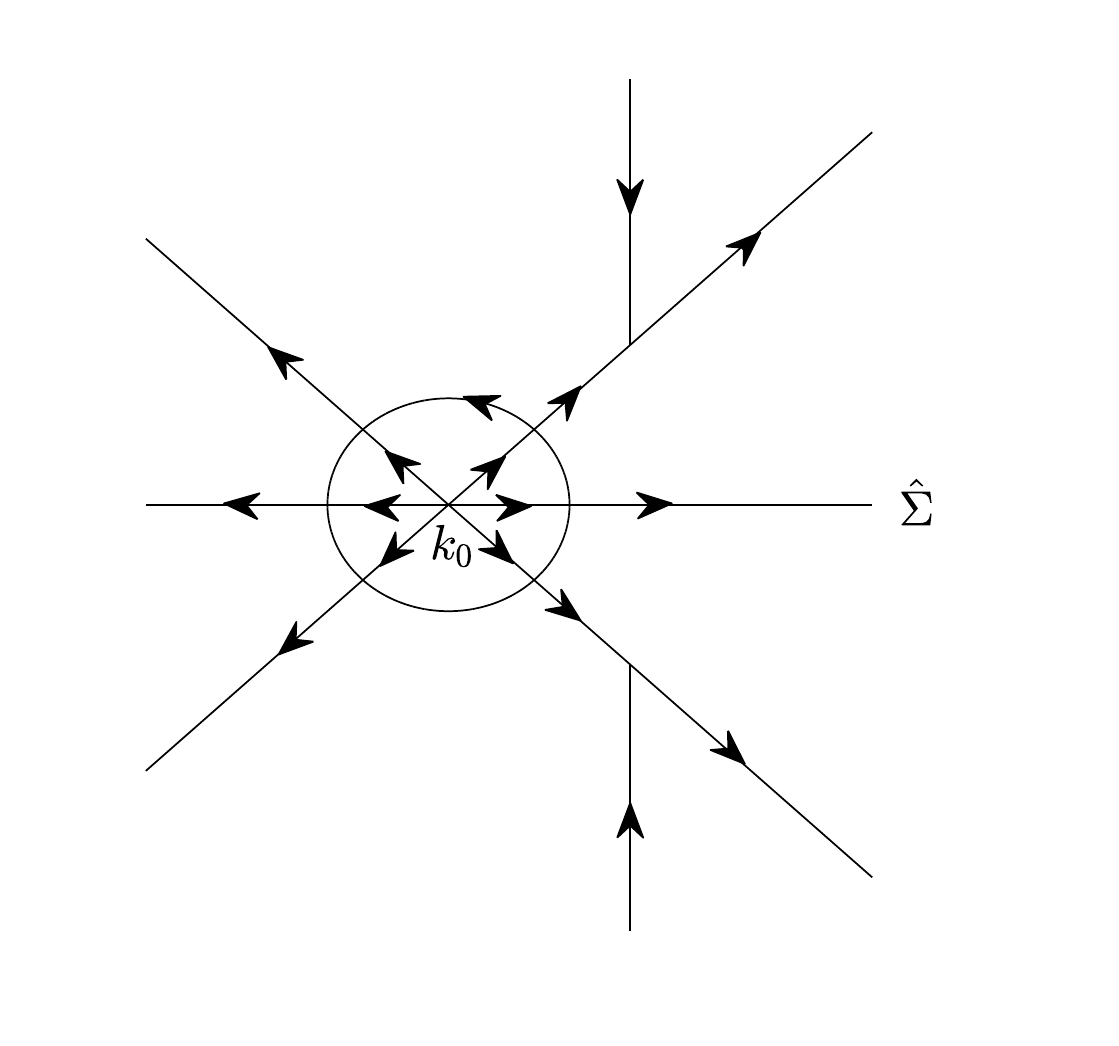}
  \caption{The contour $\hat{\Sigma}$.}\label{fig7}
\end{figure}
\begin{lemma}\label{lem4}
For $t>3$ and $\xi\in\mathcal{I}$, we have
\bea
\|\hat{W}\|_{L^1\cap L^2\cap L^\infty(\Sigma^{(3)}\setminus\mathcal{X})}&\leq Ct^{-3/2},\label{4.61}\\
\|\hat{W}\|_{L^1\cap L^2\cap L^\infty(\mathcal{X}\setminus\overline{D_\varepsilon(k_0)})}&\leq C\e^{-ct},\label{4.62}\\
\|\hat{W}\|_{L^1\cap L^2\cap L^\infty(\partial D_\varepsilon(k_0))}&\leq Ct^{-1/2},\label{4.63}
\eea
and
\bea\label{4.64}
\begin{aligned}
\|\hat{W}\|_{L^\infty(\mathcal{X}^\varepsilon)}&\leq Ct^{-1/2}\ln t,\\
\|\hat{W}\|_{L^1(\mathcal{X}^\varepsilon)}&\leq Ct^{-1}\ln t,\\
\|\hat{W}\|_{L^2(\mathcal{X}^\varepsilon)}&\leq Ct^{-3/4}\ln t.
\end{aligned}
\eea
\end{lemma}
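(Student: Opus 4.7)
The plan is to dismantle $\hat{\Sigma}$ into its four pieces — $\Sigma^{(3)}\setminus\mathcal{X}$, $\mathcal{X}\setminus\overline{D_\varepsilon(k_0)}$, $\partial D_\varepsilon(k_0)$, and $\mathcal{X}^\varepsilon$ — and on each piece express $\hat{W}=\hat{J}-I$ through the formula \eqref{4.60}, reducing to an inequality already available from Lemmas \ref{lem1}--\ref{lem3}. Two structural facts underlie the argument. First, $\delta$ and $\delta_\pm$ are uniformly bounded on their contours. Second, $\Phi(\xi,k_0)$ is purely imaginary and $\Phi(\xi,k)-\Phi(\xi,k_0)=4\ii(k-k_0)^2$, so on each ray $\mathcal{X}_j$ one has $\text{Re}\,\Phi(\xi,k)=-4|k-k_0|^2$, while on $\bfR$ one has $\text{Re}\,\Phi=0$ and on the relevant halves of $\ii\bfR$ one has $\text{Re}\,\Phi\leq 0$.

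The remainder contours $\Sigma^{(3)}\setminus\mathcal{X}$ are where \eqref{4.60} reduces to $\hat W=J^{(3)}-I$ and the nontrivial entries carry the small remainders $r_{1,r}, r_{2,r}, \Gamma_r$ from Lemmas \ref{lem1} and \ref{lem2}; the exponential factors being bounded in modulus by the sign analysis above, the $O(t^{-3/2})$ bounds on the remainders transfer to all three norms and yield \eqref{4.61}. On $\mathcal{X}\setminus\overline{D_\varepsilon(k_0)}$ the entries of $J^{(3)}-I$ instead carry the analytic parts $r_{j,a}, \Gamma_a$ (or $\Gamma$ on the portion of $\mathcal{X}_1$ inside $D_2$) multiplied by $\delta^{\mp 2}\e^{\pm t\Phi}$; the second-line bounds in \eqref{4.12} and \eqref{4.30} combined with $\text{Re}\,\Phi=-4|k-k_0|^2$ give $|\hat W|\leq C\e^{-3t|k-k_0|^2}$, and the hypothesis $|k-k_0|\geq\varepsilon$ then produces the uniform exponential decay \eqref{4.62}. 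On the circle $\partial D_\varepsilon(k_0)$, $\hat W=(M^{(k_0)})^{-1}-I$, and \eqref{4.48} together with the finite length of the circle delivers \eqref{4.63}. Finally, on $\mathcal{X}^\varepsilon$ I would exploit the jump identity $M^{(k_0)}_+=M^{(k_0)}_- J^{(k_0)}$ to write
\begin{equation*}
\hat W = M^{(k_0)}_-\bigl(J^{(3)}-J^{(k_0)}\bigr)\bigl(M^{(k_0)}_+\bigr)^{-1},
\end{equation*}
so that the uniform boundedness \eqref{4.46} of $M^{(k_0)}_\pm$ converts the three estimates on $J^{(3)}-J^{(k_0)}$ in \eqref{4.47} into precisely \eqref{4.64}.

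The main obstacle is bookkeeping rather than hard analysis: one must verify that on each of the ten labelled subcontours of $\hat\Sigma$ the exponential prefactor $\e^{\pm t\Phi}$ is paired correctly with the analytic or remainder factor it multiplies, which is exactly what the deformation leading to Fig.~\ref{fig4} was designed to achieve. Once this pairing is checked and the algebraic identity for $\hat W$ on $\mathcal{X}^\varepsilon$ is recorded, the lemma follows mechanically from Lemmas \ref{lem1}, \ref{lem2}, and \ref{lem3}.
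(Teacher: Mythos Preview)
Your proposal is correct and follows essentially the same route as the paper: decompose $\hat\Sigma$ into the four pieces, invoke Lemmas~\ref{lem1}--\ref{lem2} for the remainder contours, the exponential decay of $\e^{\pm t\Phi}$ with $|k-k_0|\geq\varepsilon$ on $\mathcal{X}\setminus\overline{D_\varepsilon(k_0)}$, \eqref{4.48} on $\partial D_\varepsilon(k_0)$, and the identity $\hat W=M^{(k_0)}_-(J^{(3)}-J^{(k_0)})(M^{(k_0)}_+)^{-1}$ together with \eqref{4.46}--\eqref{4.47} on $\mathcal{X}^\varepsilon$. The paper's proof is in fact slightly more terse than your outline.
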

\begin{proof}
Since the matrix $\hat{W}$ on $\Sigma^{(3)}\setminus\mathcal{X}$ only involves the small remainders $\Gamma_r$, $r_{1,r}$ and $r_{2,r}$, thus, by Lemmas \ref{lem1} and \ref{lem2}, the estimate \eqref{4.61} follows.

For $k\in D_2\cap(\mathcal{X}_1\setminus\overline{D_\varepsilon(k_0)})$, $\hat{W}$ only has a nonzero $-(r_{1,a}+\Gamma)\delta^{-2}\e^{t\Phi}$ in $(21)$ entry. Hence, for $t\geq1$, by \eqref{4.30}, we get
\bea
|\hat{W}_{21}|&=&|-(r_{1,a}+\Gamma)\delta^{-2}\e^{t\Phi}|\nn\\
&\leq& C|r_{1,a}+\Gamma|\e^{t\text{Re}\Phi}\nn\\
&\leq&C\e^{-3|k-k_0|^2t}\leq  C\e^{-3\varepsilon^2t}.\nn
\eea
In a similar way, the estimates on $\mathcal{X}_j\setminus\overline{D_\varepsilon(k_0)}$, $j=2,3,4$ hold.
This proves \eqref{4.62}.

The inequality \eqref{4.63} is a consequence of \eqref{4.48} and \eqref{4.60}.

For $k\in\mathcal{X}^\varepsilon$, we find $$\hat{W}=M^{(k_0)}_-(J^{(3)}-J^{(k_0)})(M^{(k_0)}_+)^{-1}.$$ Therefore, it follows from \eqref{4.46} and \eqref{4.47} that the estimate \eqref{4.64} holds.
\end{proof}

The results in Lemma \ref{lem4} imply that:
\be\label{4.65}
\begin{aligned}
\|\hat{W}\|_{L^\infty(\hat{\Sigma})}&\leq Ct^{-1/2}\ln t,\\
\|\hat{W}\|_{L^1\cap L^2(\hat{\Sigma})}&\leq Ct^{-1/2},
\end{aligned}
\quad t>3,~\xi\in\mathcal{I}.
\ee

 Let $\hat{C}$ denote the Cauchy operator associated with $\hat{\Sigma}$:
\berr
(\hat{C}f)(k)=\int_{\hat{\Sigma}}\frac{f(\zeta)}{\zeta-k}\frac{\dd \zeta}{2\pi\ii},\quad k\in\bfC\setminus\hat{\Sigma},~f\in L^2(\hat{\Sigma}).
\eerr
We denote the boundary values of $\hat{C}f$ from the left and right sides of $\hat{\Sigma}$ by $\hat{C}_+f$ and $\hat{C}_-f$, respectively. As is well known, the operators $\hat{C}_\pm$ are bounded from $L^2(\hat{\Sigma})$ to $L^2(\hat{\Sigma})$, and $\hat{C}_+-\hat{C}_-=I$, here $I$ denotes the identity operator.

Define the operator $\hat{C}_{\hat{W}}$: $L^2(\hat{\Sigma})+L^\infty(\hat{\Sigma})\rightarrow L^2(\hat{\Sigma})$ by $\hat{C}_{\hat{W}}f=\hat{C}_-(f\hat{W}),$ that is, $\hat{C}_{\hat{W}}$ is defined by $\hat{C}_{\hat{W}}(f)=\hat{C}_+(f\hat{W}_-)+\hat{C}_-(f\hat{W}_+)$ where we have chosen, for simplicity, $\hat{W}_+=\hat{W}$ and $\hat{W}_-=0$. Then, by \eqref{4.65}, we find
\be\label{4.66}
\|\hat{C}_{\hat{W}}\|_{B(L^2(\hat{\Sigma}))}\leq C\|\hat{W}\|_{L^\infty(\hat{\Sigma})}\leq Ct^{-1/2}\ln t,
\ee
where $B(L^2(\hat{\Sigma}))$ denotes the Banach space of bounded linear operators $L^2(\hat{\Sigma})\rightarrow L^2(\hat{\Sigma})$. Therefore, there exists a $T>0$ such that $I-\hat{C}_{\hat{W}}\in B(L^2(\hat{\Sigma}))$ is invertible for all $\xi\in\mathcal{I},$ $t>T$. Following this, we may define the $2\times2$ matrix-valued function $\hat{\mu}(x,t;k)$ whenever $t>T$ by
\be\label{4.67}
\hat{\mu}=I+\hat{C}_{\hat{W}}\hat{\mu}.
\ee
Then
\be\label{4.68}
\hat{M}(x,t;k)=I+\frac{1}{2\pi\ii}\int_{\hat{\Sigma}}\frac{(\hat{\mu}\hat{W})(x,t;\zeta)}{\zeta-k}\dd\zeta,\quad k\in\bfC\setminus\hat{\Sigma}
\ee
is the unique solution of the RH problem \eqref{4.59} for $t>T$.

Moreover, the function $\hat{\mu}(x,t;k)$ satisfies
\be\label{4.69}
\|\hat{\mu}(x,t;\cdot)-I\|_{L^2(\hat{\Sigma})}=O(t^{-1/2}),\quad t\rightarrow\infty,~\xi\in\mathcal{I}.
\ee
In fact, equation \eqref{4.67} is equivalent to $\hat{\mu}=I+(I-\hat{C}_{\hat{W}})^{-1}\hat{C}_{\hat{W}}I$. Using the Neumann series, we get
$$\|(I-\hat{C}_{\hat{W}})^{-1}\|_{B(L^2(\hat{\Sigma}))}\leq\frac{1}{1-\|\hat{C}_{\hat{W}}\|_{B(L^2(\hat{\Sigma}))}}$$
whenever $\|\hat{C}_{\hat{W}}\|_{B(L^2(\hat{\Sigma}))}<1$. thus, we find
\bea
\|\hat{\mu}(x,t;\cdot)-I\|_{L^2(\hat{\Sigma})}&=&\|(I-\hat{C}_{\hat{W}})^{-1}\hat{C}_{\hat{W}}I\|_{L^2(\hat{\Sigma})}\nn\\
&\leq&\|(I-\hat{C}_{\hat{W}})^{-1}\|_{B(L^2(\hat{\Sigma}))}\|\hat{C}_-(\hat{W})\|_{L^2(\hat{\Sigma})}\nn\\
&\leq&\frac{C\|\hat{W}\|_{L^2(\hat{\Sigma})}}{1-\|\hat{C}_{\hat{W}}\|_{B(L^2(\hat{\Sigma}))}}\leq C\|\hat{W}\|_{L^2(\hat{\Sigma})}\nn
\eea
for all $t$ large enough and all $\xi\in\mathcal{I}$. In view of \eqref{4.65}, this gives \eqref{4.69}.

It follows from \eqref{4.68} that
\be\label{4.70}
\lim_{k\rightarrow\infty}k(\hat{M}(x,t;k)-I)=-\frac{1}{2\pi\ii}\int_{\hat{\Sigma}}(\hat{\mu}\hat{W})(x,t;k)\dd k.
\ee
Denoting $\Sigma'=\Sigma^{(3)}\setminus\mathcal{X}^\varepsilon$, using \eqref{4.61} and \eqref{4.69}, we have
\bea
\int_{\Sigma'}(\hat{\mu}\hat{W})(x,t;k)\dd k&=&\int_{\Sigma'}\hat{W}(x,t;k)\dd k+\int_{\Sigma'}(\hat{\mu}(x,t;k)-I)\hat{W}(x,t;k)\dd k\nn\\
&\leq&\|\hat{W}\|_{L^1(\Sigma')}+\|\hat{\mu}-I\|_{L^2(\Sigma')}\|\hat{W}\|_{L^2(\Sigma')}\nn\\
&\leq&Ct^{-3/2},\quad t\rightarrow\infty.\nn
\eea
Similarly, by \eqref{4.64} and \eqref{4.69}, the contribution from $\mathcal{X}^\varepsilon$ to the right-hand side of \eqref{4.70} is $$O(\|\hat{W}\|_{L^1(\mathcal{X}^\varepsilon)}+\|\hat{\mu}-I\|_{L^2(\mathcal{X}^\varepsilon)}
\|\hat{W}\|_{L^2(\mathcal{X}^\varepsilon)})=O(t^{-1}\ln t),\quad t\rightarrow\infty.$$
Finally, by \eqref{4.49}, \eqref{4.63} and \eqref{4.69}, we can get
\bea
&&-\frac{1}{2\pi\ii}\int_{\partial D_\varepsilon(k_0)}(\hat{\mu}\hat{W})(x,t;k)\dd k\nn\\
&=&-\frac{1}{2\pi\ii}\int_{\partial D_\varepsilon(k_0)}\hat{W}(x,t;k)\dd k-\frac{1}{2\pi\ii}\int_{\partial D_\varepsilon(k_0)}(\hat{\mu}(x,t;k)-I)\hat{W}(x,t;k)\dd k\nn\\
&=&-\frac{1}{2\pi\ii}\int_{\partial D_\varepsilon(k_0)}\bigg((M^{(k_0)})^{-1}(x,t;k)-I\bigg)\dd k+O(\|\hat{\mu}-I\|_{L^2(\partial D_\varepsilon(k_0))}
\|\hat{W}\|_{L^2(\partial D_\varepsilon(k_0))})\nn\\
&=&\frac{\e^{-\frac{t\Phi(\xi,k_0)}{2}\hat{\sigma}_3}\delta^{\hat{\sigma}_3}_0M^X_1(\xi)}
{\sqrt{8t}}+O(t^{-1}),\quad t\rightarrow\infty.\nn
\eea
Thus, we obtain the following important relation
\be\label{4.71}
\lim_{k\rightarrow\infty}k(\hat{M}(x,t;k)-I)=\frac{\e^{-\frac{t\Phi(\xi,k_0)}{2}\hat{\sigma}_3}\delta^{\hat{\sigma}_3}_0M^X_1(\xi)}
{\sqrt{8t}}+O(t^{-1}\ln t),\quad t\rightarrow\infty.
\ee

Taking into account that \eqref{3.9}, \eqref{4.6}, \eqref{4.8}, \eqref{4.31} and \eqref{4.58'}, for sufficient large $k\in\bfC\setminus\hat{\Sigma}$, we get
\bea\label{4.72}
m(x,t)&=&\lim_{k\rightarrow\infty}(kM(x,t;k))_{12}\nn\\
&=&\lim_{k\rightarrow\infty}k(\hat{M}(x,t;k)-I)_{12}\nn\\
&=&\frac{\big(\e^{-\frac{t\Phi(\xi,k_0)}{2}\hat{\sigma}_3}\delta^{\hat{\sigma}_3}_0M^X_1(\xi)\big)_{12}}
{\sqrt{8t}}+O\bigg(\frac{\ln t}{t}\bigg)\\
&=&\frac{-\ii\beta^X\delta_0^2\e^{-t\Phi(\xi,k_0)}}{\sqrt{8t}}+O\bigg(\frac{\ln t}{t}\bigg)\nn\\
&=&\frac{-\ii\beta^X(8t)^{-\ii\nu}\e^{2\chi(k_0)}\e^{4\ii tk_0^2}}{\sqrt{8t}}+O\bigg(\frac{\ln t}{t}\bigg),\quad t\rightarrow\infty.\nn
\eea
From \eqref{3.9}, we have
\bea
u(x,t)=2\ii m(x,t)\e^{2\ii\int_{(0,0)}^{(x,t)}\Delta}.\nn
\eea
Recalling the definition of $\Delta(x,t)$ in \eqref{2.7},
$$\Delta(x,t)=-\beta|u|^2\dd x+\bigg(4\beta^2|u|^4-\ii\beta(u_x\bar{u}-u\bar{u}_x)\bigg)\dd t.$$
In order to compute the integral $\int_{(0,0)}^{(x,t)}\Delta$, we choose an integration contour consisting of the vertical segment from $(0,0)$ to $(0,t)$ followed by the horizontal segment from $(0,t)$ to $(x,t)$, and using the equality $|u(x,t)|^2=4|m(x,t)|^2$, we get
\be\label{4.73}
\int_{(0,0)}^{(x,t)}\Delta=\int_0^t\bigg(4\beta^2|g_0(t')|^4-\ii\beta(g_1\bar{g}_0-g_0\bar{g}_1)(t')\bigg)\dd t'-4\beta\int_0^x|m(x',t)|^2\dd x'.
\ee
It follows from \eqref{4.72} and the definition $\beta^X(q)$ in \eqref{4.43}, we have
\bea
\int_0^x|m(x',t)|^2\dd x'&=&\int_0^x\bigg|\sqrt{\frac{\nu(x'/t)}{8t}}+O\bigg(\frac{\ln t}{t}\bigg)\bigg|^2\nn\\
&=&-\frac{1}{16\pi t}\int_0^x\ln\big(1-|r(-x'/(4t))|^2\big)\dd x'+O\bigg(\frac{\ln t}{t^{3/2}}\bigg)\\
&=&-\frac{1}{4\pi}\int_0^{|k_0|}\ln\big(1-|r(-s)|^2\big)\dd s+O\bigg(\frac{\ln t}{t^{3/2}}\bigg).\nn
\eea
Collecting the above computations, we obtain our main results stated as the following theorem.
\begin{theorem}\label{the4.2}
Let $u_0(x),g_0(t),g_1(t)$ lie in the Schwartz space $S([0,\infty))$. Suppose the assumption \ref{ass1} be valid. Then, for any positive constant $N$, as $t\rightarrow\infty$, the solution $u(x,t)$ of the IBV problem for KE equation \eqref{1.1} on the half-line satisfies the following asymptotic formula
\be
u(x,t)=\frac{u_a(x,t)}{\sqrt{t}}+O\bigg(\frac{\ln t}{t}\bigg),\quad t\rightarrow\infty,~0\leq x\leq Nt,
\ee
where the error term is uniform with respect to $x$ in the given range, and the leading-order
coefficient $u_a(x,t)$ is defined by
\be
u_a(x,t)=\sqrt{\frac{\nu(\xi)}{2}}\e^{\ii\alpha(\xi,t)},
\ee
with
$$\xi=\frac{x}{t},~\nu(\xi)=-\frac{1}{2\pi}\ln(1-|r(k_0)|^2),~k_0=-\frac{\xi}{4},$$ and
\bea
\alpha(\xi,t)&=&-\frac{3\pi}{4}-\arg r(k_0)+\arg\Gamma(\ii\nu(\xi))-\nu(\xi)\ln(8t)+4k_0^2t\nn\\
&&+\frac{1}{\pi}\int_{-\infty}^{k_0}\ln(k_0-s)\dd\ln(1-|r(s)|^2)+\frac{2\beta}{\pi}\int_0^{|k_0|}\ln(1-|r(-s)|^2)\dd s\nn\\
&&+2\int_0^t\bigg(4\beta^2|g_0(t')|^4-\ii\beta(g_1\bar{g}_0-g_0\bar{g}_1)(t')\bigg)\dd t'.\nn
\eea
\end{theorem}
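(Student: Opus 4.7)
The plan is to combine the reconstruction formula \eqref{3.9} with the asymptotic identity \eqref{4.71} for $\hat M$ obtained by nonlinear steepest descent. The key observation is that the sequence of transformations $M\to M^{(1)}\to M^{(2)}\to M^{(3)}\to \hat M$ built in Section 4.1 is trivial at $k=\infty$ to leading order in the $(1,2)$ entry: the conjugation by $\delta^{-\sigma_3}$ in \eqref{4.8} satisfies $\delta=1+O(1/k)$ with $\delta\to 1$, while $G$ in \eqref{4.32} and $M^{(app)}$ in \eqref{4.58} equal the identity for $k$ outside a bounded region. Hence $m(x,t)=\lim_{k\to\infty}(kM)_{12}$ equals $\lim_{k\to\infty}k(\hat M-I)_{12}$, and reading off the $(1,2)$ entry of \eqref{4.71} produces the explicit formula \eqref{4.72} for $m(x,t)$ modulo an $O(\ln t/t)$ remainder.

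Next I would compute the line integral $\int_{(0,0)}^{(x,t)}\Delta$ appearing in \eqref{3.9}. Since $\Delta$ is closed, I integrate along the L-shaped path $(0,0)\to(0,t)\to(x,t)$. On the vertical leg only the $t$-component of $\Delta$ contributes, and using the boundary data $u(0,t)=g_0(t),~u_x(0,t)=g_1(t)$ this produces the $\int_0^t\big(4\beta^2|g_0|^4-\ii\beta(g_1\bar g_0-g_0\bar g_1)\big)\dd t'$ term of $\alpha(\xi,t)$. On the horizontal leg only the $x$-component contributes, giving $-4\beta\int_0^x|m(x',t)|^2\dd x'$ after using $|u|^2=4|m|^2$. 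The latter integral is evaluated asymptotically by substituting the leading order of \eqref{4.72} together with $|\beta^X(q)|^2=\nu(q)$, and then changing variables $x'=-4ts$; this yields the term $\frac{2\beta}{\pi}\int_0^{|k_0|}\ln(1-|r(-s)|^2)\dd s$ in $\alpha$, with a remainder that is absorbed in the final $O(\ln t/t)$ error of $u$.

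The remaining task is to form the product $u=2\ii\, m\, e^{2\ii\int\Delta}$ and simplify the phase using the explicit formula \eqref{4.43} for $\beta^X(q)$ with $q=r(k_0)$, the representation $\delta_0=(8t)^{-\ii\nu/2}e^{\chi(k_0)}$ from \eqref{4.37}, the identity $-t\Phi(\xi,k_0)=4\ii k_0^2 t$, and \eqref{4.36} for $\chi(k_0)$. Each factor contributes one piece of $\alpha(\xi,t)$: the factor $-\ii\beta^X$ gives $-\tfrac{3\pi}{4}-\arg r(k_0)+\arg\Gamma(\ii\nu(\xi))$, the $(8t)^{-\ii\nu}$ in $\delta_0^2$ gives $-\nu(\xi)\ln(8t)$, $e^{2\chi(k_0)}$ gives $\frac{1}{\pi}\int_{-\infty}^{k_0}\ln(k_0-s)\dd\ln(1-|r(s)|^2)$, and $e^{4\ii k_0^2 t}$ the $4k_0^2 t$ term, while the Raman and boundary integrals come from $2\ii\int\Delta$. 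The modulus $\sqrt{\nu(\xi)/2}$ emerges from $|{-}\ii\beta^X|=\sqrt{\nu(\xi)}$ together with the $\sqrt{8t}$ denominator and the factor of $2$ in $u=2\ii m\, e^{2\ii\int\Delta}$.

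The main obstacle is the careful bookkeeping of the phase: roughly half a dozen unit-modulus factors must be multiplied and their arguments added without sign or constant slips. Two places need particular care. First, the coefficient $\frac{2\beta}{\pi}$ in front of $\int_0^{|k_0|}\ln(1-|r(-s)|^2)\dd s$ must emerge correctly from combining the $-4\beta$ in $\Delta$, the factor $2\ii$ in the exponent $2\ii\int\Delta$, and the leading-order $\nu/(8t)$-behavior of $|m|^2$. Second, the uniformity of the $O(\ln t/t)$ error on $0\le x\le Nt$ requires that the remainders from \eqref{4.71}, from the asymptotic evaluation of $\int_0^x|m|^2\dd x'$, and from the expansions of $\beta^X$ and $\delta_1$ all be controlled uniformly in $\xi\in\mathcal{I}=(0,N]$; the uniformity is already built into Lemmas \ref{lem1}--\ref{lem4}, but must be threaded through the final combination so that the error survives multiplication by the unit-modulus exponentials and contributes at most $O(\ln t/t)$ to $u$ after being scaled by $1/\sqrt t$.
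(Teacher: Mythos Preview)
Your proposal is correct and follows essentially the same route as the paper: you recover $m(x,t)$ from \eqref{4.71} by noting the transformations \eqref{4.6}, \eqref{4.8}, \eqref{4.31}, \eqref{4.58'} are trivial at $k=\infty$ in the $(1,2)$ entry, evaluate $\int_{(0,0)}^{(x,t)}\Delta$ along the same L-shaped path, and then assemble the phase from $\beta^X$, $\delta_0^2$, $e^{-t\Phi(\xi,k_0)}$ and the boundary/Raman integrals exactly as in \eqref{4.72}--\eqref{4.73}. The only cosmetic slip is in your phase bookkeeping: the factor $-\ii\beta^X$ by itself contributes $-\tfrac{\pi}{2}-\tfrac{3\pi}{4}-\arg r(k_0)+\arg\Gamma(\ii\nu)$, and it is the $+\tfrac{\pi}{2}$ from the prefactor $2\ii$ in $u=2\ii m\,e^{2\ii\int\Delta}$ that cancels the extra $-\tfrac{\pi}{2}$; since you do invoke that prefactor, the final answer is right.
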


\medskip
\small{

}
\end{document}